\def\namedlabel#1#2{\begingroup
    #2%
    \def\@currentlabel{#2}%
    \phantomsection\label{#1}\endgroup
}
\pgfplotsset{compat=1.7}
\newtheoremstyle{theorem}
{10pt}
{3pt}
{\itshape}
{}
{\bfseries}
{.}
{.5em}
{}
\newtheorem*{rep@theorem}{\rep@title}
\newcommand{\newreptheorem}[2]{%
\newenvironment{rep#1}[1]{%
 \def\rep@title{#2 \ref{##1}}%
 \begin{rep@theorem}}%
 {\end{rep@theorem}}}
\theoremstyle{theorem}
\newtheorem{defn}{Definition}[section]
\newtheorem{lem}[defn]{Lemma}
\newtheorem{thm}[defn]{Theorem}
\newtheorem{claim}[defn]{Claim}
\newtheorem{cor}[defn]{Corollary}
\newtheorem{question}[defn]{Question}
\newtheorem{prop}[defn]{Proposition}
\theoremstyle{definition}
\title{Separating Path Systems for the Complete Graph}
\author{Belinda Wickes}
\affil{School of Mathematical Science, Queen Mary University of London, \\ London, E1 4NS, UK. \\Contact: b.wickes@qmul.ac.uk}
\date{November 2023}
\begin{document}

\maketitle

\begin{abstract}
For any graph $G$, a \textbf{separating path system} of $G$ is a family of paths in $G$ with the property that for any pair of edges in $E(G)$ there is at least one path in the family that contains one edge but not the other. We investigate the size of the smallest separating path system for $K_n$, denoted $f(K_n)$.

Our first main result is a construction that shows $f(K_n) \leq \left(\frac{21}{16}+o(1)\right)n$ for sufficiently large $n$. We also show that $f(K_n) \leq n$ whenever $n=p,p+1$ for prime $p$. It is known by simple argument that $f(K_n) \geq n-1$ for all $n \in \mathbb{N}$.

A key idea in our construction is to reduce the problem to finding a single path with some particular properties we call a Generator Path. These are defined in such a way that the $n$ cyclic rotations of a generator path provide a separating path system for $K_n$. Hence existence of a generator path for some $K_n$ gives $f(K_n) \leq n$. We construct such paths for all $K_n$ with $n \leq 20$, and show that generator paths exist whenever $n$ is prime.
\end{abstract}

\section{Introduction}
The study of separation problems was initiated by R\'{e}nyi in the 1960s \cite{Renyi}. The problem is to find a minimal family $\mathcal{F}$ of subsets of ground set $[n] = \{1,2,\dotsc,n\}$, so that for every ordered pair of distinct $x,y \in [n]$ there is some $F \in \mathcal{F}$ with either $x \in F$ and $y \notin F$, or $y \in F$ and $x \notin F$.

It is trivial that in this case $|\mathcal{F}| = \lceil \log_2(n) \rceil$. However, by enforcing some structure on our ground set or various restrictions to the members of $\mathcal{F}$, the question opens up to be an interesting problem. One particularly interesting way of doing this is to have the ground set be vertices or edges of a graph, and the separators inherit certain properties from the graph (see \cite{Cai},\cite{Bollobas Scott},\cite{Foucaud Kovse}). In this paper we focus on what separation means in the context of a ground set of graph edges, and when our separators are restricted to being paths. Problems with such a focus were introduced by Balogh, Csaba, Martin, and Pluh\'{a}r \cite{B et al} as well as Falgas-Ravry, Kittipassorn, Kor\'{a}ndi, Letzter, and Narayanan \cite{F-R et al}. We begin by outlining exactly what we mean to separate a graph by paths.

\begin{defn}\label{def sep}
Let $G$ be a graph and $e,e' \in E(G)$, and let $P \subseteq E(G)$. We say that $P$ \textbf{separates} $e$ and $e'$ if we have $e \in P$ and $e'\notin P$, or $e' \in P$ and $e\notin P$.
Let $\mathcal{S}$ be a family of subsets of $E(G)$ such that for any distinct edges $e,e' \in E(G)$ there is some $P \in \mathcal{S}$ which separates  $e$ and $e'$, then we say that $\mathcal{S}$ is a \textbf{separating system} for $G$.
If we also have the condition that every element of $\mathcal{S}$ is a path in $G$, then we call $\mathcal{S}$ a \textbf{separating path system} of $G$.
\end{defn}

We will use the notation $f(G)$ to mean the size of the smallest path separating system for a graph $G$ as per \cite{F-R et al}. Straight away we notice that $E(G)$ is itself a separating path system where all the paths consist of a single edge. So in particular we get $f(G) \leq |E(G)|$. We also get the bound $f(G) \geq \log_2(|E(G)|)$ from the initial separating property without considering paths.

In \cite{F-R et al} the authors find bounds on $f(G)$ for a selection of graphs $G$ including trees and certain random graphs. They also ask about the case where $G$ is the complete graph on $n$ vertices, that is to determine the exact value of $f(K_n)$. The current best known bounds are
\begin{equation*}
    n-1 \leq f(K_n) \leq 2n+4.
\end{equation*}
The lower bound comes from a simple counting argument found in \cite{F-R et al}, the upper bound is a direct consequence of a result in \cite{B et al}.

We note that the version of the problem we are considering (and which was the subject of \cite{F-R et al}) is sometimes called `weak separation'. The case where we have a family $\mathcal{F}$ with $F,F' \in \mathcal{F}$ where $x \in F$, $y \notin F$ and $y \in F'$, $x \notin F'$ for all unordered distinct $x,y \in [n]$, is called a `strongly separating' system. Clearly, if a family of paths strongly separates every pair, then it also weakly separates all pairs, and hence upper bounds for the strong variation also upper bound the weak.

In \cite{B et al} the authors use a probabilistic argument to give an upper bound of $f'(K_n)\leq 2n+4$, where $f'(G)$ is the strong analogue of $f(G)$. Because any strongly separating system is also inherently a weak separating system, and  therefore $f(G) \leq f'(G)$ for any graph $G$, this means we have $f(K_n)\leq 2n+4$.  We know of no explicit work towards determining an upper bound for $f(K_n)$ in the weak setting.

The main idea used in the probabilistic upper bound in \cite{B et al} is to use a covering of all the edges of $K_n$ with $\frac{n}{2}$ edge-disjoint paths, and analyse the family of paths made up of four random isomorphic copies of this covering. The use of edge-disjoint paths forces the notion of separation to be equivalent to edges not appearing in the same path in one copy of the covering, this is the case regardless of the type of separation. Therefore any weakly separating system generated by this method is automatically strongly separating. For this reason, attempting to adapt the argument of \cite{B et al} to the weak setting can give no improvement in the bound.

Understanding separating path systems of the complete graph is not only important as complete graphs are an interesting and natural class of graph, but also because (in the weak case) $K_n$ has the highest lower bound known for any graph. This means that complete graphs are a good candidate for an extremal graph for this problem, that is, the graph on $n$ vertices which requires the largest separating path system.

The aim of this paper is to investigate the properties of separating path systems of $K_n$, and provide a construction which giving the following upper bound.
\begin{thm}\label{upper bound}
For $n \geq 44$, there exists a separating path system for $K_n$ with size at most
\begin{equation*}
    \frac{21n+16\log_2n+232}{16}.
\end{equation*}
\end{thm}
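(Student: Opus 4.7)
The plan is to combine the generator-path result for primes (which yields $f(K_p) \leq p$) with a supplementary family of paths that handles the remaining edges of $K_n$. First, I would invoke a sufficiently strong result on primes in short intervals---Nagura's theorem, for example, guarantees a prime in $[5n/6, n]$ for $n \geq 25$---to select a prime $p \leq n$ with $k := n - p$ small compared to $n$. Fix a partition $V(K_n) = A \sqcup B$ with $|A| = p$ and $|B| = k$. By the generator-path existence result announced in the abstract, $K_p$ on $A$ admits a separating path system $\mathcal{P}_A$ of exactly $p$ paths, all supported entirely inside $A$.

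The key observation is that $\mathcal{P}_A$ already resolves most pairs of edges of $K_n$: by definition it separates every pair in $E(K_p[A])$, and it also separates every mixed pair $\{e, e'\}$ with $e \subseteq A$ and $e'$ incident to $B$, because every $A$-edge appears in some member of $\mathcal{P}_A$ (with at most one possible exception that is easily patched by a single extra path), and such a path uses no vertex of $B$. The only remaining pairs are those in which both edges are incident to $B$; the edges arising here form a graph $H$ consisting of the clique on $B$ together with the complete bipartite graph between $B$ and $A$, totalling $\binom{k}{2} + kp$ edges.

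The main task---and the main obstacle---is to construct a supplementary family $\mathcal{Q}$ that separates all remaining pairs with few paths. A one-vertex-at-a-time approach would cost $\Theta(kn)$ paths and is hopeless, so $\mathcal{Q}$ must amortise: each path should serve the separation needs of many $B$-vertices simultaneously. Since any path visits a vertex at most twice, the contribution of a single path at a given $B$-vertex is a subset of size at most two of its neighbours, which is a weak separator. I would therefore build $\mathcal{Q}$ in two layers: a linear-sized family of carefully structured Hamiltonian-like paths that coarsely separates each $B$-vertex's star via size-$\leq 2$ subsets of its neighbourhood, followed by an $O(\log n)$-sized binary-labelling tail that resolves the residual ambiguities (both between stars at different $B$-vertices and among edges entirely inside $B$). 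Summing $|\mathcal{P}_A| + |\mathcal{Q}| \leq p + c\, k + O(\log n)$ and balancing the per-extra-vertex cost $c$ arising from $\mathcal{Q}$ against the worst-case prime gap $n - p$ (via the chosen prime-gap result) should yield the advertised main constant $\frac{21}{16}$, with the $\log_2 n$ term and the additive $\frac{232}{16}$ absorbing the logarithmic tail and a handful of patching paths. The crux, to which most of the technical work must be dedicated, is Step $\mathcal{Q}$: showing that one really can achieve the small value of $c$ required for the $\frac{21}{16}$ constant, rather than the $\Theta(n)$ one would get from the naive construction.
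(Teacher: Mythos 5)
Your proposal takes a genuinely different route from the paper, but it leaves the hard part unsolved. The paper does not pass through primes for the main theorem: it directly constructs an approximate generator path (an $F$-separator path, Theorem~\ref{existence of P}) when $n \equiv 3,5 \pmod 6$, so that the $n$ rotations separate all pairs of edges whose types lie in $F$, and then covers the `bad' types in $D$ with two fixing paths per type via Lemma~\ref{Q paths} and Theorem~\ref{rotations plus fixings}; for the remaining residues it removes and re-attaches at most \emph{three} vertices (Cases~2--4 of the proof). Your plan instead re-attaches $k = n-p$ vertices, with $k$ possibly as large as $n/6$ under Nagura, and everything hinges on the supplementary family $\mathcal{Q}$.

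That family carries essentially all of the difficulty, and your sketch of it does not work. Since $\mathcal{P}_A$ lives entirely inside $A$, no path of $\mathcal{P}_A$ meets $B$, so every pair of edges both incident to $B$ must be separated by $\mathcal{Q}$ alone; when $k = \Theta(n)$ these edges are a positive fraction of $E(K_n)$, and the counting argument of Lemma~\ref{lower bound} applied to that subgraph already forces $|\mathcal{Q}| = \Omega(n)$, so hitting the precise per-vertex constant $c < 3$ needed to recover $\frac{21}{16}$ requires a concrete construction you do not give. Worse, the `$O(\log n)$-sized binary-labelling tail' cannot be realised by paths: to encode one bit of each star-edge's label at a vertex $v \in B$, a path would have to contain all $\Theta(n)$ edges at $v$ with that bit set, but a path visits $v$ at most once and so contains at most two of them. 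This degree-two constraint is exactly what pushes the lower bound from logarithmic to linear (Lemma~\ref{lower bound}), so the `residual ambiguities' you defer to the tail are not a small residue --- they are the bulk of the problem. The paper sidesteps all of this by amortising over \emph{edge types} rather than over \emph{vertices}: each fixing pair $Q_x, Q_x'$ covers all $n$ edges of type $x$ in just two paths, so the supplementary cost is $2|D\cup\{1\}|$, and the technical work in Theorem~\ref{existence of P} is devoted entirely to making $|D|$ small.
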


A key tool for our methods is the notion of Generator Paths, a special path in $K_n$ with nice properties that allow us to create a separating path system by taking only rotated copies of the generator. We show that existence of such a path gives rise to a separating path system of size $n$. We define these special paths and show that they exist for small values of $n$ as well as all prime values of $n$. This gives the following results.
\begin{prop}\label{n<20}
For all $n\leq 20$ we have that $f(K_n)\leq n$.
\end{prop}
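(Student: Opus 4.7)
The plan is to prove the proposition by explicit construction. Since by definition the $n$ cyclic rotations of a generator path form a separating path system for $K_n$, it suffices to exhibit a generator path in $K_n$ for every $n \leq 20$, which then gives $f(K_n) \leq n$ immediately. The cases divide naturally into three groups: trivially small $n$ (say $n \leq 3$) are handled by inspection, the prime values $n \in \{5,7,11,13,17,19\}$ are covered by the general construction for prime $n$ which is advertised in the introduction, and the composite values $n \in \{4,6,8,9,10,12,14,15,16,18,20\}$ must be treated individually.

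For the composite cases the plan is a computer-assisted search. Labelling the vertices by $\mathbb{Z}/n\mathbb{Z}$ so that a cyclic rotation is simply addition modulo $n$, the rotates of a candidate path are determined by a single representative, and whether a candidate is a generator reduces to a finite combinatorial condition on the edge multisets of the rotates. The search itself can be a backtracking procedure that extends partial paths one vertex at a time, pruning a branch as soon as the separation condition cannot possibly be satisfied by any completion (for instance, when some pair of edge-lengths or edge-classes is already unrecoverable). Standard symmetry-breaking, such as fixing the starting vertex at $0$ and fixing the orientation of the first edge, cuts the search space by a factor of roughly $2n$.

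The main obstacle is purely computational: at $n$ near $20$ the space of candidate paths is far too large for brute-force enumeration, so the efficiency of the pruning and the ordering heuristics of the search are essential. Once a single generator path is produced for each composite $n$ in the range, the proof is completed by tabulating the path (as an ordered sequence of vertices in $\mathbb{Z}/n\mathbb{Z}$) and recording the short, finite verification that each pair of edges of $K_n$ is separated by at least one of the $n$ rotations.
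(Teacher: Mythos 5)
Your overall strategy matches the paper's: reduce to exhibiting a generator path for each $n \leq 20$ and then invoke the rotation lemma (Lemma \ref{rotaion gives sps}) to conclude $f(K_n) \leq n$. This is exactly the reduction the paper makes. Your proposal to handle the prime values $n \in \{5,7,11,13,17,19\}$ via the primitive-root construction (Lemma \ref{gen path for p}) is a legitimate alternative to the paper's approach of simply writing out an explicit path for every $n$ in the range, primes included; it saves a few lines of tabulation at the cost of importing the heavier machinery of that lemma.

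The genuine gap is that you never actually produce the generator paths. The proposition is an existence claim for each concrete $n \leq 20$, and for composite $n$ you only describe a backtracking search that \emph{would} find such a path if one exists, without exhibiting the output. The statement ``once a single generator path is produced for each composite $n$, the proof is completed by tabulating the path'' is precisely the part that remains to be done --- the proposal establishes the verification framework but not the witnesses. The paper completes the argument by listing all sixteen paths $P(5),\dotsc,P(20)$ explicitly, each of which can then be checked against conditions \ref{GP:1}--\ref{GP:3} by a finite, routine computation. Without those explicit paths (or at least a verified record that a search succeeded for each $n$), the proposal is a plan rather than a proof. As a side note, your worry that the search space near $n=20$ is ``far too large for brute-force enumeration'' is overstated: with any reasonable pruning the paths are found quickly, and the paper in fact reports finding them all by hand.
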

\begin{thm}\label{upper bound p p+1}
Let $p$ be an odd prime, then we have $f(K_p) \leq p$ and $f(K_{p+1}) \leq p+1$.
\end{thm}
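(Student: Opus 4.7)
The plan is to construct explicit generator paths in $K_p$ and $K_{p+1}$. Recall that a generator path in $K_n$ is a path whose $n$ cyclic rotations (on vertex set $\mathbb{Z}_n$) form a separating path system, so exhibiting one gives $f(K_n)\le n$ immediately. Writing $S_d\subseteq\mathbb{Z}_n$ for the set of base points of edges of difference class $d\in\{1,\dots,\lfloor n/2\rfloor\}$ in a candidate path $P$, I expect the generator condition to be equivalent to: \textbf{(i)} each $S_d$ is a nonempty proper subset of $\mathbb{Z}_n$, and \textbf{(ii)} for every pair $d\ne d'$, $S_d$ is not a translate of $S_{d'}$ in $\mathbb{Z}_n$.

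For $K_p$, primality makes (i) almost automatic, since no nonempty proper subset of $\mathbb{Z}_p$ is translation-invariant. The heart of the argument is arranging (ii). My plan is to exhibit a specific Hamiltonian path---for instance, a zigzag $0,1,p-1,2,p-2,\dots$ or a path built from partial sums of powers of a primitive root of $\mathbb{Z}_p$. Since a Hamiltonian path on $p$ vertices has $p-1$ edges distributed over $(p-1)/2$ difference classes, a graceful arrangement yields $|S_d|=2$ for every $d$; translation-equivalence of two-element subsets of $\mathbb{Z}_p$ is then controlled by their internal gaps, reducing (ii) to the requirement that these gaps exhaust $\{1,\dots,(p-1)/2\}$. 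This can be verified for the chosen explicit path by exploiting the field structure of $\mathbb{Z}_p$.

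For $K_{p+1}$, I would adjoin a vertex $\infty$ fixed by the $\mathbb{Z}_p$-rotation, so that the $p$ edges incident to $\infty$ form a single orbit. Extending the generator path for $K_p$ by attaching $\infty$ as a pendant produces a path in $K_{p+1}$ whose $p$ cyclic rotations cover every $\infty$-edge exactly once; this already separates $\infty$-edges pairwise and separates each $\infty$-edge from most non-$\infty$-edges. One additional carefully chosen path is then added to patch the remaining separations (such as between an $\infty$-edge and a non-$\infty$-edge sharing a vertex), yielding the bound $f(K_{p+1})\le p+1$.

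The main obstacle is verifying condition (ii) in the $K_p$ construction: exhibiting a Hamiltonian path whose two-element sets $S_d$ are pairwise non-translate, which for $|S_d|=2$ amounts to a graceful-labelling-type requirement on the path. For $p$ prime, the multiplicative structure of $\mathbb{Z}_p$ offers natural candidate paths (via primitive roots, or sequences of quadratic residues), and I expect a short algebraic argument to settle it. Once the $K_p$ case is pinned down, the $K_{p+1}$ extension follows by relatively routine bookkeeping on $\infty$-incident edges.
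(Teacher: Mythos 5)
Your overall strategy is the same as the paper's: build a base path in $K_p$ from partial sums of powers of a primitive root $g \bmod p$ (Lemma \ref{gen path for p}), then extend to $K_{p+1}$ by attaching a fixed vertex $\infty$ as a pendant to rotations (Theorem \ref{upper bound p+1}). Your translation-based reformulation --- (i) each $S_d$ nonempty and proper, (ii) no $S_d$ a translate of $S_{d'}$ --- is a correct and clean restatement of ``the rotations separate'' when $n=p$ is prime, and it matches what \ref{GP:1}--\ref{GP:3} are enforcing. However, two of your ``expected to follow'' steps hide real issues.

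For $K_p$, condition (ii) is the whole content, and you have not verified it for either of your candidates. Your first candidate, the zigzag $0,1,p-1,2,p-2,\dots$, in fact fails completely: a short calculation shows that for every type $d$ the two $d$-type edges of the zigzag have clockwise distance $\tfrac{p-1}{2}$, so all the two-element sets $S_d$ are translates of one another. Your second candidate, the primitive-root partial-sum path, is the paper's, but note that it has $p-2$ edges, not $p-1$, since $\sum_{i=1}^{p-1}g^i \equiv 0$ returns to the start; so one type (type $1$, as $g^{(p-1)/2}\equiv -1$) occurs exactly once and $|S_d|=2$ does \emph{not} hold for every $d$. The actual verification (that $\frac{g^{k+j+1}-g^j}{g-1}\equiv \pm\frac{g^{k+m+1}-g^m}{g-1}$ forces $g^{k+1}\equiv\pm 1$, contradicting primitivity) is short but it is the heart of the proof and cannot be waved through.

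For $K_{p+1}$, the plan of pendant-attaching $\infty$ to \emph{all} $p$ rotations and then adding one patching path $Q$ does not close. The pairs left unseparated are $\{(i,\infty),\,h_i\}$ where $h_i$ is the unique $1$-type edge of $P_i$; these do \emph{not} share a vertex, and there are $p$ of them, one per rotation. A path $Q$ avoiding $\infty$ would have to contain every $h_i$, i.e.\ the entire $1$-type Hamiltonian cycle --- impossible. A path $Q$ passing through $\infty$ can omit at most two $h_i$'s, and for $Q$ to close up into a path one would need $i_1, i_2$ to coincide with arc endpoints of the broken $1$-cycle, which the algebra does not permit for this $P$. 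The paper sidesteps this by \emph{not} attaching a pendant to $P_0$, making $(p,p+1)$ the single uncovered edge of $K_{p+1}$, and using $T\setminus\{h_0\}$ as the extra path. That adjustment is the missing idea in your sketch.
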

Our general upper bound (Theorem \ref{upper bound}) comes from constructing an approximate version of this special Generator Path, and correcting any problems with a small number of additional paths. The bulk of the work is in finding this approximation path, which is the content of Section \ref{sec: P construction}.

Before moving on to the results for the complete graph, we highlight some appealing conjectures and results from the literature. Firstly, both \cite{F-R et al} and \cite{B et al} conjectured that any graph $G$ with $n$ vertices must have a (weakly and strongly respectively) separating path system of size $Cn$ where $C$ is some universal constant. This was shown to be true in both cases through a direct proof by Bonamy, Botler, Dross, Naia, and Skokan \cite{Bonamy et al} with $C=19$. The authors state that the value $C=19$ is likely far from optimal, highlighting the question in \cite{F-R et al} of whether it may actually be the case that $f(G) \leq (1+o(1))n$ for all $G$ on $n$ vertices. If this is the case, then clearly the complete graph is one of the least efficient graphs to separate with paths.

\section{Lower Bound}\label{sec: lower bound}
There are several key observations from the definition that highlight the structure of small separating path systems. Let $\mathcal{S}$ be a separating path system for a graph $G$. Firstly, there can be no more than one edge that is not found in any $P \in \mathcal{S}$. This means our family of paths must cover all but one of the edges of $G$.

Secondly, at most one edge in $G$ can appear in every path of $\mathcal{S}$. If not, say $x,y \in P$ for all $P \in \mathcal{S}$, then there is clearly no $P$ that separates $x$ and $y$, therefore $\mathcal{S}$ cannot be a separating path system for $G$.

Finally, looking at some path $P$ in $\mathcal{S}$, if any two edges appear exclusively in $P$ then they cannot be separated by $\mathcal{S}$. Hence, for any path in $\mathcal{S}$ there is at most one unique edge, that is, at most one edge that does not appear in any other path.

With these observations in mind, we repeat the lower bound argument from \cite{F-R et al} to demonstrate the properties small separating path systems should have.

\begin{lem}\label{lower bound}
{\normalfont(Falgas-Ravry, Kittipassorn, Korándi, Letzter, and Narayanan \cite{F-R et al})}
For the complete graph on $n$ vertices, the minimum size of a path separating system is at least $n-1$. That is, $f(K_n) \geq n-1$.
\end{lem}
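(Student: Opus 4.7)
My plan is to prove Lemma \ref{lower bound} by converting the three observations stated just above into a double-counting argument on edge--path incidences. Let $\mathcal{S} = \{P_1, \ldots, P_k\}$ be a separating path system for $K_n$, and for each edge $e \in E(K_n)$ write $m_e = |\{i : e \in P_i\}|$ for the number of paths containing $e$. I will estimate the total incidence count $\sum_e m_e$ from both sides.

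For the lower bound I would classify the edges of $K_n$ by their multiplicity using the listed observations. The first observation gives at most one edge with $m_e = 0$. The third observation gives at most one edge with $m_e = 1$ \emph{per path}, so at most $k$ edges of multiplicity $1$ in total. All remaining $\binom{n}{2} - k - 1$ edges then satisfy $m_e \geq 2$, giving
\[
\sum_e m_e \;\geq\; 2\!\left(\binom{n}{2} - k - 1\right) + k \;=\; 2\binom{n}{2} - k - 2.
\]
(If it happens that $k \geq \binom{n}{2} - 1$ then the lemma is already trivial, since $\binom{n}{2} - 1 \geq n - 1$ for all $n \geq 3$, so I may assume this case away.)

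For the upper bound, any path in $K_n$ has at most $n-1$ edges (achieved only by a Hamilton path), so
\[
\sum_e m_e \;=\; \sum_{i=1}^{k} |E(P_i)| \;\leq\; k(n-1).
\]
Combining the two estimates gives $k(n-1) \geq n(n-1) - k - 2$, which rearranges to $k \geq n - 1 - \tfrac{2}{n}$. Since $k$ must be an integer and $\tfrac{2}{n} < 1$ for $n \geq 3$, this forces $k \geq n-1$, as required (and the cases $n \leq 2$ are trivial). There is no real obstacle here: once the three observations are in hand — and the author has already supplied them — the lemma follows from the short arithmetic above, the only subtlety being the final integer-rounding step that closes the $\tfrac{2}{n}$ gap.
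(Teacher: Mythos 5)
Your proposal is correct and uses essentially the same double-counting argument as the paper: both bound the edge--path incidence count $\sum_e m_e$ from above by $k(n-1)$ and from below using the facts that at most one edge has multiplicity $0$ and each path contributes at most one edge of multiplicity $1$. The paper phrases it as a proof by contradiction assuming $k \leq n-2$ and compares two estimates for the number of edges of multiplicity at least two, whereas you rearrange directly to $k \geq n-1-\tfrac{2}{n}$ and round up, but this is a presentational rather than a substantive difference.
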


\begin{proof}
Let $n \in \mathbb{N}$ and suppose for a contradiction that $\mathcal{S}$ is a separating path system for $K_n$ such that $|\mathcal{S}|\leq n-2$.

Each $P \in \mathcal{S}$ has at most 1 unique edge, therefore at most $n-2$ edges of $K_n$ appear exactly once in $\mathcal{S}$. Recall that there is at most 1 edge that appears exactly 0 times in $\mathcal{S}$. We use this to count the number of edges that appear at least twice in $\mathcal{S}$, the number of such edges is at least
\begin{equation*}
    \binom{n}{2} - (n-2) - 1 = \frac{1}{2}(n-2)(n-1).
\end{equation*}

On the other hand, as the maximum length of any $P \in \mathcal{S}$ is $n-1$, we have that the total number of edges used (with multiplicity) is at most $(n-1)(n-2)$. The number of edges in $K_n$ appearing in $\mathcal{S}$ is at least $\binom{n}{2} - 1$. We can count the edges that appear twice using these values, this is at most
\begin{equation*}
    (n-2)(n-1) - \left(\binom{n}{2} - 1\right) = \frac{1}{2}(n-2)(n-3).
\end{equation*}

This is a contradiction since $\mathcal{S}$ cannot satisfy both conditions, hence $f(K_n) \geq n-1$.
\end{proof}

This proof demonstrates the importance of long paths in separating systems. To get an upper bound matching this lower bound of $n-1$, we must construct families with full length paths. In fact any separating path system for $K_n$ with size $n-1$ must have the following properties:
\begin{itemize}
    \item Each path has length $n-1$,
    \item Every path in the system has one unique edge,
    \item All other edges appear in exactly two paths.
\end{itemize}
With this in mind it is easy to construct separating path systems of size $n-1$ by hand for small values of $n$, showing this lower bound is tight for small $n$.

It should also be noted that the only property of paths used in this proof is the fact that a path contains at most $n-1$ edges. Since this is true of many graphs it can be used in other separation contexts. For example, any tree which is a subgraph of $K_n$ has at most $n-1$ edges, so this lower bound holds for trees in general. In fact, by taking a family of $n-1$ stars, each centred around a different vertex of $K_n$, we create a separating tree system for $K_n$ of size exactly $n-1$. So the lower bound of $n-1$ is tight for separating tree systems.

\section{Upper Bounds}\label{sec: upper bound}
While there has been no direct work on an upper bound for our problem, Balogh, Csaba, Martin, and Pluh\'{a}r gave the following result for the strong version of the problem.

\begin{thm}\label{B et al upper bound}
{\normalfont(Balogh, Csaba, Martin, and Pluh\'{a}r \cite{B et al})}
For $n\geq 10$ there exists a strong separating path system for $K_n$ with size at most $2n+4$.
\end{thm}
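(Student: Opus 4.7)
The plan is to construct a family $\mathcal{S}$ of $2n+4$ paths in $K_n$ whose edge-signature map $\phi(e) = \{P \in \mathcal{S} : e \in P\}$ sends $E(K_n)$ to an antichain in $2^{\mathcal{S}}$ under inclusion. Observe that this antichain condition is precisely equivalent to strong separation: two edges $e, e'$ fail to be strongly separated if and only if one of $\phi(e), \phi(e')$ is contained in the other. So the problem becomes: design $2n+4$ paths so that no edge's path-incidence set sits inside another's.

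For the backbone I would use a Hamilton cycle decomposition of $K_n$. For odd $n$, Walecki's classical construction gives $(n-1)/2$ edge-disjoint Hamilton cycles; for even $n$, one can delete a vertex, decompose the resulting $K_{n-1}$, and repair the missing edges using a bounded number of short corrective paths. From each Hamilton cycle $C_i$ I would extract approximately four long paths by removing four carefully chosen ``cut'' edges one at a time, producing roughly $4 \cdot (n-1)/2 = 2n-2$ paths. The remaining budget of $O(1)$ auxiliary paths would be used to refine the resulting signatures into a true antichain.

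Verification of strong separation then splits into two cases. For edges $e \in C_i$ and $e' \in C_j$ lying in different cycles, any backbone path derived from $C_i$ contains $e$ but misses $e'$, and symmetrically any backbone path from $C_j$ separates in the opposite direction, so these pairs are strongly separated essentially for free. For edges $e, e'$ in the same cycle $C_i$, if at least one of them is among the chosen cuts of $C_i$ the four backbone paths from $C_i$ handle both separation directions directly. The delicate case is when both $e$ and $e'$ avoid every cut of $C_i$: all four backbone paths from $C_i$ then contain both edges, so these pairs carry identical signatures on the backbone and must be distinguished entirely by the auxiliary paths.

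The main obstacle is precisely this intra-cycle separation for uncut pairs, since the constant-sized collection of auxiliary paths must break all such residual ties simultaneously across all $(n-1)/2$ cycles. I would address this either by choosing the auxiliary paths to traverse each cycle $C_i$ in an asymmetric fashion (so that, within each cycle, every pair of edges is separated by at least one auxiliary path in each direction), or by a probabilistic argument showing that a suitable random selection of a constant number of paths breaks every remaining comparability with positive probability. Finally, the extension from odd to even $n$ and the verification of the threshold $n \geq 10$ would be a routine but careful bookkeeping check that the corrective paths fit within the $+4$ additive slack.
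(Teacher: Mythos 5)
The paper does not actually prove Theorem \ref{B et al upper bound}; it is quoted from Balogh, Csaba, Martin, and Pluh\'{a}r, and the surrounding text explicitly states that the original proof was probabilistic. So there is no internal proof to compare against, but your proposed construction can still be assessed on its own terms, and it contains a fatal counting obstruction at exactly the point you flag as ``delicate.''

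Your backbone consumes $4 \cdot \frac{n-1}{2} = 2n-2$ of the budget, leaving only $6$ auxiliary paths. Fix a cycle $C_i$ and consider two edges $e, e'$ of $C_i$ that are not among the four cut edges. Every backbone path derived from $C_i$ contains both $e$ and $e'$, and every backbone path derived from any other $C_j$ (being edge-disjoint from $C_i$) contains neither. Hence the incidence vectors of $e$ and $e'$ can differ only in the coordinates indexed by the $6$ auxiliary paths. Strong separation requires these restricted vectors, ranging over all $n-4$ uncut edges of $C_i$, to form an antichain in $2^{[6]}$; by Sperner's theorem the largest such antichain has $\binom{6}{3}=20$ elements. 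For $n > 24$ this is already impossible, so no choice of $6$ auxiliary paths, deterministic or random, can work. The same obstruction defeats any variant with $O(1)$ auxiliary paths: you need the auxiliaries alone to strongly separate $\Theta(n)$ edges within a single cycle, which requires $\Omega(\log n)$ of them at minimum, and in practice many more. The $2n+4$ bound is genuinely achieved by a probabilistic argument in the cited reference, not by a Walecki-plus-corrections construction of the kind you sketch; to make a constructive route viable you would need to distribute the separating burden across the backbone itself (e.g.\ by varying which edges are cut per cycle in a coordinated way, or by letting paths cross between cycles), rather than delegating all intra-cycle uncut pairs to a constant-sized repair set.
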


This gives an upper bound of $f(K_n) \leq 2n+4$, the proof in this case was probabilistic.

\subsection{Generator Paths}\label{sec: gen path}
Thinking of the vertices of $K_n$ as vertices of a regular polygon, we may take any path $P$ and create a new path $P'$ by rotating each edge of $P$ one vertex clockwise. In this way we can find $n$ paths that are isomorphic copies of each other. If our initial path $P$ has some carefully chosen properties forcing each rotation to share exactly one edge with $P$, then we may have a family of separating paths generated by a single path. This reduces the problem to looking for one path rather than a whole system of paths. We need to be careful when choosing our path so that the rotations overlap each other in the right way.

To set it up, label the vertices $1, \dotsc, n$ and arrange them clockwise on a regular polygon.
\begin{wrapfigure}[12]{l}{0pt}
    \centering
\begin{tikzpicture}
\begin{scope}[rotate=162]
    \tikzstyle{edge} = [draw,very thick,black]

    \foreach \x in {1,...,5}{\node[draw,circle,fill=black,inner sep=1pt] (N\x) at ({-(\x)*360/5}:2cm) {}; \node at ({-(\x)*360/5}:2.3cm) {$\x$};}

    \draw[edge] (N4) -- (N1) -- (N2) -- (N5);
    \node at ({-(1.5)*360/5}:2cm) {$P$};
    
    \draw[edge,red] (N1) -- (N3) -- (N2);
    \draw[edge,red,dashed] (N5) -- (N2);
    \node at ({-(2.5)*360/5}:2cm) {\textcolor{red}{$P'$}};
\end{scope}
\end{tikzpicture}
    \caption{Rotations in $K_5$}
    \label{fig:gen path K5}
\end{wrapfigure}
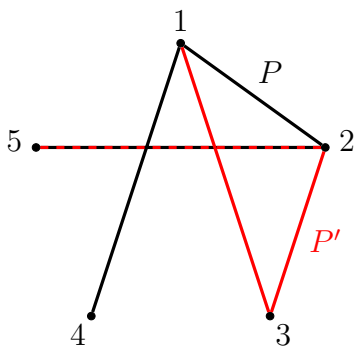
It is helpful to think of our edges by the distance they travel rather than endpoints here. We call an edge at vertex $v$ an \textbf{$x$-type} edge if its other endpoint is $v+x$ for $1 \leq x \leq \lfloor \frac{n}{2} \rfloor$, where all calculations are done modulo $n$. Note that there are precisely $n$ edges in $K_n$ that have type $x$ for $1 \leq x <  \frac{n}{2}$, and in the case where $n$ is even there are precisely $\frac{n}{2}$ edges that have type $\frac{n}{2}$.

In Figure \ref{fig:gen path K5} we have an example of a path $P=(4,1,2,5)$ in $K_5$. This path contains two $2$-type edges, at vertices $4$ and $5$, and one $1$-type edge at vertex $1$. Consider the path $P'=(5,2,3,1)$, it also contains two $2$-type edges, this time at vertices $5$ and $1$, and one $1$-type edge at vertex $2$. This new path is simply $P$ rotated once clockwise. Note that the edge types appear in the same order and at the same distance from each other, this is because edge type is unchanged by rotation. If we take the family consisting of all 5 rotations of $P$, we get a separating path system for $K_5$.

Naturally we wish to pin down the properties a path must have in order to generate a separating path system. But first we clarify some terminology and notation. For any $x$-type edge $e=(v,v+x)$ we call $v$ the \textbf{starting vertex} of $e$. Note that every edge has a unique starting vertex unless it is of type $\frac{n}{2}$, in which case either endpoint can be considered as the starting vertex. We define \textbf{the clockwise distance (on $K_n$)} between vertices $v$ and $u$ to be the value $\min(|v-u|, n-|v-u|)$ and write $cd(v,u)$ for this. Similarly, we say \textbf{the clockwise distance (on $K_n$)} between two edges $e$ and $e'$ to mean the clockwise distance between the starting vertices of $e$ and $e'$ and we write $cd(e,e')$ for this value. In particular, if $e=(v,v+x)$ has type $x$ and $e'=(u,u+y)$ has type $y$, then $cd(e,e')=cd(v,u)=\min(|v-u|, n-|v-u|)$.

\begin{defn}\label{gen path}
A path $P$ on the complete graph $K_n$ is called a \textbf{generator path} for $n$ if it satisfies the following conditions. For odd $n$:
\begin{description}[style=multiline, labelwidth=1.5cm]
    \item[\namedlabel{GP:1}{(GP1)}] $P$ contains at least one edge of each of the $\frac{n-1}{2}$ types.
    \item[\namedlabel{GP:2}{(GP2)}] There is at most one edge type that appears exactly once, and there are no edge types that appear more than twice in $P$.
    \item[\namedlabel{GP:3}{(GP3)}] Let $e$ and $e'$ be $x$-type edges in $P$, and let $cd(e,e')=d$. If there are two other edges $h,h' \in P$, such that both have type $y \neq x$, then $cd(h,h') \neq d$.
\end{description}
For even $n$:
\begin{description}[style=multiline, labelwidth=1.5cm]
    \item[\namedlabel{EGP:1}{(GP1)}] $P$ contains at least one edge of each of the $\frac{n}{2}$ types.
    \item[\namedlabel{EGP:2}{(GP2)}] There is at most one edge type from $[\frac{n}{2}-1]$ that appears exactly once, and there are no edge types that appear more than twice in $P$.
    \item[\namedlabel{EGP:3}{(GP3)}] Let $e$ and $e'$ be $x$-type edges in $P$, and let $cd(e,e')=d$. If there are two other edges $h,h' \in P$, such that both have type $y \neq x$, then $cd(h,h') \neq d$. Additionally, no pair of edges in $P$ with the same type are at distance $\frac{n}{2}$.
\end{description}
\end{defn}

The next result shows that if we can find a generator path for $n$, then it will give us a separating path system of size $n$.

\begin{lem}\label{rotaion gives sps}
If $P$ is a generator path for $n$, then the family of all $n$ rotations of $P$ is a separating path system for $K_n$. Hence, if such a path $P$ exists then $f(K_n)\leq n$.
\end{lem}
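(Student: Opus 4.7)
The plan is to show directly that for any two distinct edges $e, e' \in E(K_n)$, some rotation $P_i$ of $P$ contains exactly one of them. Equivalently, setting $I(e) = \{i \in \mathbb{Z}_n : e \in P_i\}$, it suffices to prove that $I(e) \neq I(e')$ whenever $e \neq e'$.

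The first step is to compute $I(e)$ explicitly. Rotation preserves edge type, so an $x$-type edge $e$ can only arise by rotating an $x$-type edge of $P$. Writing $W_x$ for the set of starting vertices of the $x$-type edges of $P$ and $v$ for the starting vertex of $e$, one has $I(e) = v - W_x \pmod{n}$, and in particular $|I(e)| = k_x := |W_x|$. Conditions (GP1) and (GP2) ensure $k_x \in \{1,2\}$ for every type $x$, with at most one type satisfying $k_x = 1$.

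The bulk of the proof is a short case analysis. If $e, e'$ have different types $x \neq y$, then (GP2) forces at least one of $k_x, k_y$ to equal $2$. If the sets differ in size we are done; otherwise $k_x = k_y = 2$, and an assumed equality $\{v-w_1, v-w_2\} = \{v'-u_1, v'-u_2\}$ forces $w_2 - w_1 \equiv \pm(u_2 - u_1) \pmod{n}$, so that the cyclic distance between the two $x$-type edges of $P$ equals that between the two $y$-type edges, contradicting (GP3). If $e, e'$ share a type $x$ with starting vertices $v \neq v'$, then $k_x = 1$ immediately yields $I(e) \neq I(e')$, while $k_x = 2$ with $I(e) = I(e')$ forces $2(w_1 - w_2) \equiv 0 \pmod{n}$; for odd $n$ this contradicts $w_1 \neq w_2$, and for even $n$ it yields $cd(w_1, w_2) = \tfrac{n}{2}$, which is excluded by the additional clause of (GP3) for even $n$.

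I expect the main obstacle to be the careful treatment of $\tfrac{n}{2}$-type edges when $n$ is even. Because each such edge has two legitimate starting vertices, a single $\tfrac{n}{2}$-type edge in $P$ places a given target edge into two rotations, so $I(e)$ becomes invariant under translation by $\tfrac{n}{2}$ and its size doubles. The arguments above extend once one bundles the two representatives together and reruns the case analysis: the new potential coincidences---in particular the risk that $v' - v \equiv \tfrac{n}{2}$ collapses $I(e)$ onto $I(e')$ for two distinct $\tfrac{n}{2}$-type edges---are exactly those ruled out by the extra clause of (GP3) for even $n$, while cross-type comparisons with $y \neq \tfrac{n}{2}$ follow from the same cyclic-distance argument as in the generic case. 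This step is notationally rather than conceptually delicate.
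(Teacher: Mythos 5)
Your proposal is correct and follows essentially the same route as the paper: both exploit that rotation translates starting vertices, so membership of a fixed edge in the rotations of $P$ is governed by the set of starting vertices of the same-type edges of $P$, and \ref{GP:1}--\ref{GP:3} are precisely the conditions that make these membership patterns distinct for distinct edges. Your index-set formulation $I(e)=v-W_x \pmod n$ is a tidy repackaging of the paper's per-pair argument, which for each $e,e'$ explicitly exhibits a rotation containing one but not the other, with the same case split (same type, different type, and the $\tfrac{n}{2}$-type wrinkle for even $n$) and the same invocation of each GP condition.
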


\begin{proof}
Let $P$ be a generator path for $n$, and $\mathcal{S}$ be the family generated by taking all the rotations of $P$. In particular, let $P=P_0$ and $P_i=\{ (u+i,v+i) : (u,v) \in P \}$ be the path generated by rotating $P$ clockwise by $i$. Since $P$ is a path in $K_n$ we have that every $P_i \in \mathcal{S}$ is a path in $K_n$. It is left only to show that every pair of edges in $K_n$ is separated by some path in $\mathcal{S}$.

Consider edges $e=(v,v+x)$ and $e'=(v',v'+y)$ in $K_n$, where $e$ is an $x$-type edge and $e'$ is a $y$-type edge. 

By \ref{GP:1}, $P$ contains at least one edge of type $x$, therefore there is some $i\in[0,n-1]$ such that $e \in P_i$. If $e' \notin P_i$ then the edges are separated by $P_i$, so assume otherwise.

Suppose first that $y=x$. Let $cd(e,e')=d$,  without loss of generality we can assume that $v'=v+d$. Consider the path $P_{i+d}$, where calculations are modulo $n$, we must have $e' \in P_{i+d}$. Note that $v'+d = v +2d \neq v$. Indeed, this is only true when $2d=n$, since $P$ is a generator path we have $d \neq \frac{n}{2}$ by \ref{EGP:3}. Therefore, $e \notin P_{i+d}$ since $P_i$ does not contain any additional $x$-type edges (by \ref{GP:2}). Hence, $e$ and $e'$ are separated by $P_{i+d}$.

Now suppose that $y \neq x$. We first consider the case where $n$ is even and $x=\frac{n}{2}$. If $x=\frac{n}{2}$, then we have that $e \in P_{i+\frac{n}{2}}$. Since $e' \in P_i$ in order for $e'$ to be in $P_{i+\frac{n}{2}}$ we must have that $P_i$ contains another $y$-type edge $h'=(u',u'+y)$, such that $u'+\frac{n}{2}=v'$ or $u'-\frac{n}{2}=v'$. In other words we must have $cd(e',h')=\frac{n}{2}$, this cannot happen by \ref{EGP:3}. Similarly, if $y=\frac{n}{2}$ then $e$ and $e'$ are separated by $\mathcal{S}$.

Finally we suppose that $y \neq x$ and $x,y \neq \frac{n}{2}$. Then by \ref{GP:1}, $P$ contains two edges of one of the types $x$ or $y$. Without loss of generality assume that $P$ contains two $x$-type edges. Let $h=(u,u+x) \in P_i$ be the other $x$-type edge, and let $cd(e,h)=d$. We have that $e \in P_j$ for one of $j=i+d$ or $j=i-d$. The only way for $e' \in P_j$ is if there is another $y$-type edge $h'=(u',u'+y) \in P_i$ such that $cd(e',h')=d$. This clearly cannot happen by \ref{GP:3}. So we have that $e$ and $e'$ are separated by $P_j$.
\end{proof}

This makes finding separating path systems for small values of $n$ fairly straightforward. In fact we can find generator paths for all $n\leq 20$ by hand. The generator paths for $n=2,3,4$ are trivial, the generator paths $P(n)$ for other values of $n \leq 20$ are given below:\\
$P(5)=(1,3,2,5)$,\\ 
$P(6)=(1,5,4,3,6)$,\\
$P(7)=(1,2,3,5,7,4)$,\\
$P(8)=(1,3,5,2,6,7,8)$,\\
$P(9)=(1,5,9,3,4,6,8,2)$, \\
$P(10)=(1,4,7,6,5,9,3,8,10)$,\\
$P(11)=(1,3,5,10,4,11,7,8,9,6)$,\\
$P(12)=(1,2,11,9,10,3,7,4,8,6,12,5)$,\\
$P(13)=(1,3,4,13,11,6,10,7,12,5,8,9)$,\\
$P(14)=(1,3,6,9,10,11,2,7,13,5,12,8,4)$,\\
$P(15)=(1,14,15,5,10,3,12,6,9,13,2,4,11,8,7)$,\\
$P(16)=(1,11,13,15,14,3,8,12,16,9,2,10,7,4,5)$,\\
$P(17)=(1,3,5,16,10,11,12,9,6,15,7,14,4,17,13,8)$,\\
$P(18)=(1,15,10,5,13,3,12,9,6,7,8,2,14,16,18,11,4)$,\\
$P(19)=(1,3,5,18,12,11,10,13,16,7,17,6,14,9,4,19,15,8)$,\\
$P(20)=(1,5,10,15,18,8,17,6,20,14,7,19,2,4,16,9,13,12,11)$.
\begin{repprop}{n<20}
For all $n\leq 20$ we have that $f(K_n)\leq n$.
\end{repprop}

As well as these small examples, we can also find generator paths for prime values of $n$. The construction uses the properties of primitive roots, an integer $g$ is a primitive root modulo $n$ if for every integer $h$ which is co-prime to $n$ there is some integer $i$ such that $h \equiv g^i \mod n$.

\begin{lem}\label{gen path for p}
There exists a generator path for $n$ whenever $n$ is an odd prime.
\end{lem}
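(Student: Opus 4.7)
The plan is to produce an explicit generator path from a primitive root $g$ modulo $p$. Label the vertices of $K_p$ by $\mathbb{Z}_p$ and take
\[
P = (g^0,\, g^1,\, g^2,\, \ldots,\, g^{p-2}).
\]
Since $g$ is a primitive root these vertices are all the nonzero residues modulo $p$, so $P$ is a genuine path in $K_p$ on $p-1$ vertices with $p-2$ edges (only vertex $0$ is omitted, which is permitted because the definition of a generator path does not require $P$ to be Hamiltonian).

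The central observation is that the $i$-th edge of $P$ joins $g^{i-1}$ and $g^{i}$, with signed difference $g^{i-1}(g-1)$. Two such edges at positions $i$ and $j$ share a type precisely when $g^{i-1}(g-1) \equiv \pm g^{j-1}(g-1) \pmod{p}$, equivalently when $j \equiv i$ or $j \equiv i + (p-1)/2 \pmod{p-1}$. Pairing positions on $\{1,\ldots,p-2\}$ via $i \leftrightarrow i+(p-1)/2$, exactly one position is unpaired (namely $i=(p-1)/2$, whose intended partner falls outside the range). This immediately gives \ref{GP:1} and \ref{GP:2}: one edge type appears exactly once, and each of the remaining $(p-3)/2$ types appears exactly twice.

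For \ref{GP:3} I would compute, for each same-type pair at positions $i$ and $i+(p-1)/2$, the clockwise distance between the two starting vertices. A short calculation using $g^{(p-1)/2} \equiv -1 \pmod{p}$ shows that this difference is congruent to $\pm g^{i-1}(g+1) \pmod{p}$, independently of which endpoint is declared the starting vertex, so the distance for the $i$-th pair equals the type of $g^{i-1}(g+1)$. For distinct $i,i' \in \{1,\ldots,(p-3)/2\}$ these types coincide only if $g^{i-1} \equiv \pm g^{i'-1} \pmod{p}$, forcing $i \equiv i' \pmod{(p-1)/2}$, which is impossible inside a window of length strictly less than $(p-1)/2$. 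Hence the $(p-3)/2$ distances are pairwise distinct, giving \ref{GP:3}. The argument needs $g+1 \not\equiv 0 \pmod{p}$, which holds for all $p \geq 5$ since a primitive root has order $p-1 \geq 4$ and so cannot equal $-1$; the case $p=3$ is immediate. Appealing to Lemma~\ref{rotaion gives sps} then yields $f(K_p) \leq p$.

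The main technical point is the distance computation in \ref{GP:3}: the awkward ``starting vertex'' ambiguity cancels once one rewrites the distance as the type of the common quantity $g^{i-1}(g+1)$, after which distinctness is a clean consequence of $g$ generating $\mathbb{Z}_p^{\ast}$. Everything else reduces to bookkeeping with exponents modulo $p-1$.
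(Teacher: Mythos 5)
Your proof is correct and takes essentially the same approach as the paper: a path whose successive edge lengths form a geometric progression in a primitive root $g$, with $g^{(p-1)/2}\equiv-1\pmod p$ supplying the pairing of same-type edges and primitivity of $g$ forcing the clockwise distances between pairs to be distinct. Your path is in fact the image of the paper's path $\bigl(p,\ g,\ g+g^2,\dotsc\bigr)$ under the affine bijection $v\mapsto(g-1)v+g$, and parameterizing the vertices directly as powers of $g$ yields a slightly cleaner verification of \ref{GP:3}, collapsing the paper's two-case starting-vertex analysis into the single quantity $g^{i-1}(g+1)$.
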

\begin{proof}
Let $p$ be an odd prime, then it is known that there exists a primitive root $g$ modulo $p$. We can therefore write every integer in $[p-1]$ in the form $g^i$ for $i \in [p-1]$.

Consider the path on $K_p$ given by $P=(p,g,g+g^2,g+g^2+g^3, \dotsc, \sum_{i=1}^{p-2}g^i)$, which is the path starting at vertex $p$ and taking a $g$ length edge, followed by a $g^2$ length edge, followed by a $g^3$ length edge, and so on until there are $p-2$ edges in the path. Here we use `length' to avoid confusion, an $x$-type edge may have length $x$ or $n-x$. We claim that this path is a generator path for $n=p$.

We must check that $P$ is indeed a path in $K_p$, and that it satisfies \ref{GP:1}, \ref{GP:2}, and \ref{GP:3}.

First we show that $P$ is indeed a path. To see this note that $P$ is a path as long as none of the vertices $p,g,g+g^2,g+g^2+g^3, \dotsc, \sum_{i=1}^{j}g^i ,\dotsc, \sum_{i=1}^{p-2}g^i$ are congruent modulo $p$.

Suppose that
\begin{equation*}
    \sum_{i=1}^{m}g^i \equiv \sum_{i=1}^{j}g^i  \mod p
\end{equation*}
for some $m,j \in [p-2]$. Then we must have that
\begin{equation*}
    \frac{g^{m+1}-g}{g-1} \equiv \frac{g^{j+1}-g}{g-1} \mod p
\end{equation*}
and hence
\begin{equation*}
    g^{m-j} \equiv 1 \mod p.
\end{equation*}
Since $g$ is a primitive root we know that $g^{p-1} \equiv 1 \mod p$ and $g^i  \not\equiv 1 \mod p$ whenever $i < p-1$. Therefore we must have
\begin{equation*}
    m \equiv j \mod p-1.
\end{equation*}
Clearly this means $m=j$ since $m,j \in [p-2]$. Now we must also check that the vertex $p$ is distinct from the others. For a contradiction suppose that
\begin{equation*}
    \sum_{i=1}^{m}g^i \equiv p  \mod p
\end{equation*}
for some $m \in [p-2]$. This gives
\begin{equation*}
    g^m \equiv 1 \mod p
\end{equation*}
so we must have $m \equiv 0 \mod p-1$, a contradiction of $m\in [p-2]$.

We conclude that $P$ is indeed a path in $K_p$. It remains to show that $P$ satisfies the three conditions.

Let $k = \frac{p-1}{2}$ and note that since $g$ is a primitive root and $p$ is prime, we have that $g^k \equiv -1 \mod p$ and $g^i  \not\equiv -1 \mod p$ whenever $i < k$. This means that $g^i + g^{k+i} \equiv 0 \mod p$. In other words an edge in $P$ with length $g^i$ and an edge with length $g^{k+i}$ have the same edge type.

Observe that the first $k$ edges of $P$ each have unique edge type, and the $(k+i)$th edge of $P$ has the same type as edge $i$. Thus $P$ contains two edges of every type except type $1$, since the $k$th edge in $P$ is the unique $1$-type edge. Hence, $P$ satisfies \ref{GP:1} and \ref{GP:2}.

For any pair of same type edges in $P$ one edge will be length $g^j$ and the other will be length $g^{k+j}$ for $j \in [k]$.  Since $g^i \not \equiv g^{k+i} \mod p$ this means we have one of two cases, either
\begin{enumerate}
    \item the starting vertex of the $g^j$ edge will be $\sum_{i=1}^{j-1}g^i$ and the starting vertex of the $g^{k+j}$ edge will be $\sum_{i=1}^{k+j}g^i$, or
    \item the starting vertex of the $g^j$ edge will be $\sum_{i=1}^{j}g^i$ and the starting vertex of the $g^{k+j}$ edge will be $\sum_{i=1}^{k+j-1}g^i$.
\end{enumerate}
Note that the clockwise distance between the edges for the first case is given by
\begin{equation*}
    \sum_{i=1}^{k+j}g^i - \sum_{i=1}^{j-1}g^i \equiv \frac{g^{k+j+1}-g^j}{g-1} \mod p \quad \text{ or } \quad -\frac{g^{k+j+1}-g^j}{g-1} \mod p
\end{equation*}
whichever is in $[k]$, and in the second case it is given by
\begin{equation*}
    \sum_{i=1}^{k+j-1}g^i - \sum_{i=1}^{j}g^i \equiv \frac{g^{k+j}-g^{j+1}}{g-1} \mod p \quad \text{ or } \quad -\frac{g^{k+j}-g^{j+1}}{g-1} \mod p.
\end{equation*}
Since we know the additive inverse of $g^j$ is $-g^j \equiv g^{k+j} \mod p$ we can see that the distances in the two cases are equivalent. So we assume we are in case 1.

We must now show that the clockwise distance between pairs of same type edges is not repeated, \ref{GP:3}.

Consider edges in $P$ given by $g^j$,$g^{k+j}$,$g^m$, and $g^{k+m}$.
Suppose that
\begin{equation*}
    \frac{g^{k+j+1}-g^j}{g-1} \equiv \frac{g^{k+m+1}-g^m}{g-1} \mod p.
\end{equation*}
Then we have that 
\begin{equation*}
    g^{k+j+1}(1-g^{m-j}) \equiv g^j(1-g^{m-j}) \mod p
\end{equation*}
and hence
\begin{equation*}
    g^{k+1} \equiv 1 \mod p.
\end{equation*}
This is a contradiction as we know that $g^{p-1} \equiv 1 \mod p$ and $g^i  \not\equiv 1 \mod p$ whenever $i < p-1$. The proof when we take one or both of the clockwise distances to be $-\frac{g^{k+i+1}-g^ji}{g-1} \mod p$ is equivalent.

Therefore $P$ is indeed a generator path for $n=p$.
\end{proof}

This gives us an upper bound for $f(K_n)$ when $n$ is a prime number.
\begin{cor}\label{upper bound p}
We have $f(K_p) \leq p$ whenever $p$ is prime.
\end{cor}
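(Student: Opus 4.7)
The plan is to observe that this corollary follows immediately by chaining together the two preceding results, namely Lemma \ref{gen path for p} (existence of a generator path for odd prime $n$) and Lemma \ref{rotaion gives sps} (existence of a generator path for $n$ implies $f(K_n) \le n$).

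Concretely, I would proceed as follows. First, dispose of the trivial case $p=2$: here $K_2$ has only a single edge, so the family consisting of that edge alone vacuously separates every pair of distinct edges, giving $f(K_2)=1 \le 2$. Then, for any odd prime $p$, invoke Lemma \ref{gen path for p} to produce a generator path $P$ for $p$. Finally, applying Lemma \ref{rotaion gives sps} to $P$ yields a separating path system for $K_p$ consisting of the $p$ cyclic rotations of $P$, which directly gives $f(K_p) \le p$.

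There is essentially no obstacle here, as all the substantive work has already been carried out in the proof of Lemma \ref{gen path for p}; this statement is just the packaging of that lemma as a bound on $f(K_p)$. The only minor thing to note is that the case $p=2$ is not covered by Lemma \ref{gen path for p} (which is stated for odd primes) and therefore must be handled separately, as above.
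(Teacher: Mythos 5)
Your proposal is correct and follows exactly the route the paper intends: Corollary~\ref{upper bound p} is stated without proof precisely because it is the immediate conjunction of Lemma~\ref{gen path for p} (a generator path exists for every odd prime) and Lemma~\ref{rotaion gives sps} (existence of a generator path for $n$ yields $f(K_n)\le n$). Your explicit treatment of $p=2$ is a small extra bit of care the paper glosses over (the paper covers $n=2$ separately via Proposition~\ref{n<20}, noting the generator path for $n=2$ is trivial), but it is the right thing to flag since Lemma~\ref{gen path for p} is stated only for odd primes.
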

We can also use the structure of the generator path in Lemma \ref{gen path for p} along with the properties of primes to give an upper bound for $n=p+1$.

\begin{thm}\label{upper bound p+1}
Let $p$ be an odd prime, then we have $f(K_{p+1}) \leq p+1$.
\end{thm}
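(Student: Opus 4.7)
The plan is to build a separating path system of size $p+1$ for $K_{p+1}$ using the generator path for $K_p$ from Lemma~\ref{gen path for p}. Let $g$ be a primitive root modulo the odd prime $p$, and let $P = (p, g, g+g^2, \ldots, \sum_{i=1}^{p-2} g^i)$ be the resulting path with rotations $P_0, \ldots, P_{p-1}$, which form a separating path system for $K_p$ by Lemma~\ref{rotaion gives sps}. First I would produce $p$ paths $\widetilde{P}_i$ in $K_{p+1}$ by extending each $P_i$ with one ``crown'' edge, attaching $p+1$ to a consistently chosen endpoint of $P_i$ so that the assignment of crown edges to rotations is a bijection.

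Each crown edge $(v, p+1)$ then lies in exactly one $\widetilde{P}_i$, separating all crown/crown pairs; pairs of $K_p$-edges remain separated by Lemma~\ref{rotaion gives sps}; and a crown $(v, p+1)$ is separated from every non-type-$1$ $K_p$-edge $e$, because \ref{GP:2} guarantees that $e$ occupies two rotations, at least one of which does not carry that particular crown. Thus the only unseparated pairs are the $p$ ``problem pairs'' $(c_i, t_i)$, where $c_i$ is the crown appended to $\widetilde{P}_i$ and $t_i$ is the unique type-$1$ edge sitting inside $P_i$. Writing $V_k = \sum_{j=1}^{(p-1)/2} g^j$, the edge $t_i$ starts at vertex $V_k + i$, so adding one more path $Q$ separates all problem pairs if and only if $Q$ contains exactly one of $(i, p+1)$ and $(V_k+i, V_k+i+1)$ for every $i$.

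The main obstacle is exhibiting such a $Q$. Since any path in $K_{p+1}$ has degree at most two at $p+1$ and can use at most $p-1$ of the $p$ type-$1$ edges of $K_p$ (which otherwise close up into the outer $p$-cycle), $Q$ is essentially forced to be a Hamilton path in $K_{p+1}$ made of $p-1$ type-$1$ edges together with one or two carefully chosen crown edges attached so that the resulting arcs glue through $p+1$. Translating the ``exactly one of'' constraint into an arithmetic relation makes the missing type-$1$ starting vertices and the crown attachment points related by an affine shift involving $V_k$, and my plan is to exploit the freedom to choose the primitive root $g$ and the rotation used as the base $P$ so that $V_k$ falls into a form compatible with an admissible gluing; here primality of $p$ enters, since $i \mapsto V_k + i$ is then a single orbit of $\mathbb{Z}/p\mathbb{Z}$, enabling the adjustment. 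When the direct Hamilton-path $Q$ does not fit a particular arithmetic configuration, the fallback is to leave one rotation $P_{i_0}$ unextended, incorporate the freed crown $(i_0, p+1)$ into $Q$, and verify that the revised set of problem pairs still admits a separating Hamilton path. Combining $\widetilde{P}_0, \ldots, \widetilde{P}_{p-1}$ with this $Q$ then yields a separating path system of size $p+1$ for $K_{p+1}$, establishing $f(K_{p+1}) \leq p+1$.
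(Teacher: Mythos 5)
Your proposal correctly identifies the central mechanism — extend each rotation $P_i$ with a ``crown'' edge to $p+1$, observe that the only unseparated pairs are crown/type-$1$ pairs living in a single extended rotation, and repair these with one additional path made mostly of type-$1$ edges — but you then paint yourself into a corner that the paper simply avoids. By insisting on extending \emph{all} $p$ rotations, you force the repair path $Q$ to be a Hamilton path in $K_{p+1}$ that threads through $p+1$ via one or two crown edges, and whether such a $Q$ can be glued depends on where the type-$1$ edge of the base rotation starts. That is a genuine arithmetic constraint, and ``exploit the freedom to choose the primitive root $g$'' is not a proof: there is no argument given (and none is obvious) that some choice of $g$ or of base rotation always makes the endpoints line up. Even your fallback — leave one rotation $P_{i_0}$ unextended and splice the freed crown into $Q$ — still requires $i_0$ to be an endpoint of the $(p-1)$-edge arc of type-$1$ edges, which again may fail.

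The observation you are missing is the one from Section~\ref{sec: lower bound}: a separating path system is allowed to omit exactly one edge of $K_n$ entirely. The paper uses this slack. It leaves $P_0$ unextended, never uses the crown edge $(p,p{+}1)$ anywhere, sets $P'_i = P_i \cup \{(i,p{+}1)\}$ for $i \in [p-1]$, and takes $Q = T \setminus \{h\}$, the $p$-cycle of type-$1$ edges with the unique type-$1$ edge $h$ of $P_0$ removed. Since $h$ is in $P_0$ only, and each remaining crown/type-$1$ problem pair $(c_i, t_i)$ for $i \geq 1$ has $t_i \neq h$ and hence $t_i \in Q$ while $c_i \notin Q$, all problem pairs are separated with no arithmetic conditions at all. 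The uncovered crown $(p,p{+}1)$ causes no harm because every other edge of $K_{p+1}$ appears in at least one path. So while your framing of the problem pairs is correct, the proof is incomplete as written; the clean way out is to permit one uncovered edge rather than trying to cover every crown with a bijection and then squeeze the leftovers into a Hamilton path.
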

\begin{proof}
Let $K'$ be the complete graph on $p$ vertices given by removing the vertex $p+1$ from $K_{p+1}$. Let $P$ be the generator path for $p$ given in the proof of Lemma \ref{gen path for p}, and let $\mathcal{P}=\{P_i : 0 \leq i \leq p-1\}$ be the family of rotations of $P$, where $P_i$ is $P$ rotated clockwise by $i$. Recall that the edge $h=(\sum_{i=1}^{k-1}g^i,\sum_{i=1}^{k}g^i)$ where $k=\frac{p-1}{2}$ is the unique $1$-type edge in $P$.

Let $T = \{(1,2),(2,3),\dotsc,(p-1,p),(p,1)\}$ be the set of all $1$-type edges in $K'$, and note that $T \setminus \{h\}$ is a path in $K'$ and also in $K_{p+1}$.

Let $P'_i = P_i \cup \{(i,p+1)\}$ for every $1 \leq i \leq p-1$. Since each $P_i$ has the vertex $i$ as endpoint, each $P'_i$ is a path in $K_{p+1}$ and contains $P_i$ as a sub-path.

Define $\mathcal{S} = \{P_0,P'_1,P'_2,\dotsc,P'_{p-1},T \setminus \{h\}\}$. We claim that $\mathcal{S}$ is a separating path system for $K_{p+1}$.

Clearly all elements of $\mathcal{S}$ are paths in $K_{p+1}$, so it remains to check that any two edges are separated by some path in $\mathcal{S}$. Let $e,e' \in E(K_{p+1})$, suppose first that $e,e' \in E(K')$. Then $e$ and $e'$ are separated by $\{P_0,P'_1,P'_2,\dotsc,P'_{p-1}\}$ since we know they are separated by $\mathcal{P}$.

Suppose instead that $e'\in E(K')$ and $e \notin E(K')$. Then $e$ must be of the form $(v,p+1)$ for some $v \in [p]$. Let $x\in [\frac{p-1}{2}]$ be the edge type of $e'$ in $K'$, and note that if $x \neq 1$ there exists $0 \leq i,j \leq p-1$ such that $e' \in P_i,P_j$. Therefore we have $e' \in P'_i,P'_j$. Clearly $e=(v,p+1)$ cannot be in both of these paths, therefore the edges $e$ and $e'$ must be separated by $\mathcal{S}$. Now, if $x=1$ note that either $e' \in P_0$ or $e' \in T \setminus \{h\}$, and since $e$ cannot be in either of these paths, the edges are again separated by $\mathcal{S}$.

Finally, suppose $e,e' \notin E(K')$. Then we can write them in the form $e=(v,p+1)$ and $e'=(u,p+1)$ where $u,v \in [p]$. Since $u \neq v$ we must have that at least one of $u$ and $v$ lies in $[p-1]$, without loss of generality assume $v \in [p-1]$. Then we have that $e \in P'_v$, and clearly $u \notin P'_v$. Hence the edges are separated by $\mathcal{S}$.

\end{proof}
Together, Corollary \ref{upper bound p} and Theorem \ref{upper bound p+1} give Theorem \ref{upper bound p p+1}. Based on the small cases along with Lemma \ref{gen path for p}, we suspect that a generator path exists for every $n$, and in particular that the lower bound in Theorem \ref{lower bound} is close to the true value of $f(K_n)$ for all $n$. It would be useful to know for certain whether or not generator paths exist in general.
\begin{question}
For which values of $n \in \mathbb{N}$ do generator paths exist?
\end{question}
Our motivation for this question comes from separating path systems, but the question of existence of paths containing specific arrangements of edge types seems tricky and interesting in its own right. For instance, McKay and Peters investigate which multisets of edge types are realisable as a path in \cite{McKay Peters}.

Furthermore, if $P$ is a generator path such that each edge type appears exactly twice (except $\frac{n}{2}$ in the even case), then the separating path system produced by rotations of $P$ is in fact strongly separating. This follows from an easy variant of the proof of Lemma \ref{rotaion gives sps}. This would also give an upper bound of $n$ for the strong problem. In fact, $P(12)$ and $P(15)$ above are both generators of strongly separating path systems.
\begin{question}
For which values of $n \in \mathbb{N}$ do generator paths with two edges of each type from $[\frac{n-1}{2}]$ exist?
\end{question}

It is also worth noting that the definition given in \ref{gen path} is a little more strict than needed for generating a separating path system. For $n$ odd, let $P$ be a path in $K_n$ that contains $3$ edges of type $x$ such that $P \setminus \{e\}$ (which is not necessarily a path) satisfies \ref{GP:1}, \ref{GP:2}, and \ref{GP:3}, where $e$ is one of the $x$-types in $P$. Then the rotations of $P$ give a separating path system for $K_n$ as long as $cd(e,e')$ and $cd(e,e'')$ are not both equal to $\frac{n}{3}$, where $e'$ and $e''$ are the other $x$-type edges in $P$. There is also an equivalent condition for even values of $n$, with more care taken when $x=\frac{n}{2}$. Since the conditions for these paths are slightly more relaxed they are possibly easier to find, but we do not make use of them in this paper so we keep to the simpler definition here. 

\subsection{Main Construction}\label{sec: approx paths}
Since the conditions for a generator path are all based on edge types, any path $P$ which is not a generator path fails the conditions for at least one type. In other words, we can separate the edge types of $P$ into two classifications, those that follow \ref{GP:1}, \ref{GP:2}, and \ref{GP:3}, $F$, and those that do not, $D$. Formally we have the following definition.

\begin{defn}\label{def: f sep}
A set of edges $A$ is an $F$-separator for $K_n$ if we can partition the edge types of $K_n$ into sets $F$ and $D$ such that the following holds.
\begin{enumerate}
    \item $A$ contains at least one $y$-type edge for every $y \in F \cup D$.
    \item $A$ contains exactly two $x$-type edges for every $x \in F$.
    \item $A_F$ satisfies \ref{GP:3}, where $A_F=\{e\in A: \text{the edge type of } e \text{ is in } F\}$.
\end{enumerate}
If $A$ is a path in $K_n$, we call it an $F$-separator path. Whenever we have an $F$-separator we will use $D:=[\frac{n}{2}] \setminus F$ to mean the other half of the partition.
\end{defn}

Note that a generator path is an $F$-separator path, where $F= [\frac{n-1}{2}]\setminus\{x\}$ with $x$ the edge type that appears exactly once (of which there is at most one).

It is plain to see by the argument in the proof of Lemma \ref{rotaion gives sps}, the family of paths constructed by taking all the rotations of an $F$-separator path $P$ will separate all edges with type in $F$ from each other. This means if we can find some $P$ which is an $F$-separator path for some large set $F$, then we can find a family of $n$ paths that separates most of the edges in $K_n$. We are then able to use a number of `fixing' paths that will separate edges with type in $D$. The family containing the two types of path gives a separating path system of $K_n$.

Our aim for the rest of this section is as follows. We first define these fixing paths, giving a separating path system based on any path (Theorem \ref{rotations plus fixings}). We then construct an $F$-separator path $P$ with large $F$ (and hence small $D$) to use as the base (Theorem \ref{existence of P}). Our construction only works when $n \equiv 3 \mod 6$ or $n \equiv 5 \mod 6$. The final step is to use the construction of $P$ and the family based on it to give separating path systems for other values of $n$ (Theorem \ref{upper bound}).

Note that the rotation construction method naturally forces edges of the same type to be separated from each other. Indeed, looking at the proof of Lemma \ref{rotaion gives sps} we see that for the case $x=y$ the only condition needed was that $cd(e,e')\neq \frac{n}{2}$. This is always the case when $n$ is odd since $\frac{n}{2}$ is not an integer. So as long as there is no edge type appearing more than twice in $P$, and $n$ is odd, we have that all edges of the same type are separated from each other. In general, if $P$ contains exactly $m$ edges of the same type $e_1,\dotsc,e_m$, then they are separated from each other by the rotations of $P$ unless $cd(e_i,e_{i+1})=\frac{n}{m}$ for all $i \in [m-1]$. We say that any $P$ with such edges of type $x$ has \textbf{equally spaced $x$-type edges}. This leads us to Theorem \ref{rotations plus fixings}, but before we state this we need the following result.

\begin{lem}\label{Q paths}
For all odd $n$ and each edge type $x$ there exist two paths in $K_n$, $Q_x$ and $Q_x'$, such that $Q_x\cup Q_x'$ covers all $x$-type edges in $K_n$ and all edges in $Q_x\cup Q_x'$ have type from $\{1,x\}$.
\end{lem}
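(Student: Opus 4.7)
My plan is to construct $Q_x$ and $Q_x'$ explicitly by exploiting the cyclic structure of the type-$x$ edges in $K_n$. Let $d = \gcd(n, x)$ and $m = n/d$. The type-$x$ edges of $K_n$ decompose into $d$ vertex-disjoint cycles $C_0, C_1, \ldots, C_{d-1}$, where $C_i$ consists of the vertices $i, i+x, i+2x, \ldots, i+(m-1)x \pmod{n}$. When $x = n/2$, so $m = 2$, each $C_i$ degenerates to a single edge and the construction simplifies accordingly.

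For the main path $Q_x$, define $v_i = i(1-x) \bmod n$ (so $v_i \in C_i$), and let $Q_x$ traverse $C_0$ from $v_0 = 0$ along type-$x$ edges until reaching $(m-1)x$, then take the type-$1$ bridge to $v_1 = (m-1)x + 1$, then traverse $C_1$ similarly, and so on through all $d$ cycles. This uses $m-1$ edges of each $C_i$ plus $d-1$ type-$1$ bridges. Set
\[
Q_x' = \bigl(v_0,\; v_0 - x,\; v_1,\; v_1 - x,\; \ldots,\; v_{d-1},\; v_{d-1} - x\bigr),
\]
whose edges alternate between the $d$ ``closing'' type-$x$ edges $\{v_i, v_i - x\}$ (precisely the type-$x$ edges of $C_i$ not used by $Q_x$) and $d-1$ type-$1$ bridges $\{v_i - x, v_{i+1}\}$, which are valid because $v_{i+1} = v_i - x + 1$. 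Together $Q_x$ and $Q_x'$ contain every type-$x$ edge of $K_n$ and use no edges of any type other than $1$ and $x$.

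The main technical step is verifying that $Q_x$ and $Q_x'$ are genuinely paths, i.e.\ that their vertex lists have no repetitions. For $Q_x$ the vertex list is $\{i + (j-i)x \bmod n : 0 \le i < d,\ 0 \le j < m\}$: since $d \mid x$, reducing modulo $d$ forces $i = i'$ whenever two listed vertices coincide, and then equality of $j$ and $j'$ follows from the standard fact that $\gcd(m, x/d) = 1$. For $Q_x'$ the analogous mod-$d$ reduction applied to the vertices $i(1-x) - kx$ (with $k \in \{0,1\}$) pins down $i$, after which $k$ is forced because $n \nmid x$. This modular-arithmetic bookkeeping is the only real obstacle; the boundary cases $d = 1$ (where $Q_x'$ is just the single edge $\{0, -x\}$) and $x = n/2$ (where $Q_x$ already covers every type-$x$ edge and $Q_x'$ may be taken to be a trivial one-vertex path) fit into the same framework with no additional work.
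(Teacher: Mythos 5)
Your construction is essentially identical to the paper's: both decompose the type-$x$ edges into $\gcd(n,x)$ vertex-disjoint cycles, build $Q_x$ by walking $m-1$ edges around each cycle and hopping to the next via a type-$1$ bridge, and build $Q_x'$ from the single leftover ``closing'' edge of each cycle linked by the same type-$1$ bridges (differing only in starting at vertex $0$ rather than $1$ and in writing the representatives explicitly as $v_i = i(1-x)$). The extra modular-arithmetic bookkeeping you give to verify that the vertex lists have no repetitions, and the explicit treatment of the degenerate cases $d=1$ and $x=n/2$, are welcome rigor that the paper leaves implicit, but they do not change the underlying argument.
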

\begin{proof}
Let $f$ be the highest common factor of $n$ and $x$, and let $a\in \mathbb{N}$ be such that $af=n$.

Consider the subgraph of $K_n$ containing only $x$-type edges, we have exactly $f$ isomorphic cycles of length $a$. Let $C_1,C_2,\dotsc,C_f$ be the cycles labelled so that a vertex $i$ will be contained in cycle $C_{i \mod f}$. We now describe a path in $K_n$ using sections of each $C_i$ and some linking $1$-type edges.

The path $Q_x$ starts at vertex $1$, and follows $C_1$ for $a-1$ edges, before moving to $C_2$ with a $1$-type edge. It then follows $C_2$ for $a-1$ edges. This continues until $a-1$ edges from each of the cycles $C_1,C_2,\dotsc,C_f$ have been followed. Formally, set $v_1=1$ and recursively define $v_i'=v_i -x \mod n$ and $v_{i+1}=v_i'+1$ for $i\in [f]$. Note that for each $i\in [f]$ the edge $(v_i,v_i') \in C_i$ and therefore has type $x$. Also note that the edges $(v_i',v_{i+1})$ are $1$-type edges for all $i\in [f]$. We define the path $Q_x$ as follows,
\begin{equation*}
    Q_x := \left(\bigcup_{i=1}^{f} C_i \setminus \{(v_i,v_i')\}\right) \cup \left\{(v_i',v_{i+1}) : i \in [f-1]\right\}.
\end{equation*}
Note this is indeed a path since the cycles $C_1,C_2,\dotsc,C_f$ are pairwise vertex disjoint, and each edge $(v_i',v_{i+1})$ links $C_i$ to $C_{i+1}$.

Then we define $Q_x'$ to be
\begin{equation*}
    Q_x' := \{(v_i,v_i') : i \in [f]\} \cup \{(v_i',v_{i+1}) : i \in [f-1]\}.
\end{equation*}
Thus all the $x$-type edges are covered in two paths using only $x$-type and $1$-type edges.
\end{proof}

Now we can give our result.

\begin{thm}\label{rotations plus fixings}
Let $n \in \mathbb{N}$ be odd, and $P$ an $F$-separator path for $K_n$ with no equally spaced $x$-type edges. Then $\mathcal{P}\cup\mathcal{D}$ is a separating path system for $K_n$ where $\mathcal{D} = \{Q_x,Q_x' :x \in D\cup \{1\} \}$ (with $Q_x$ and $Q_x'$ as in Lemma \ref{Q paths} and $D =[\frac{n}{2}] \setminus F$), and $\mathcal{P}$ is the family of $n$ rotations of $P$. In particular $f(K_n) \leq n +2|D\cup \{1\}|$.
\end{thm}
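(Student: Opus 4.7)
The plan is to verify directly that $\mathcal{P}\cup\mathcal{D}$ separates every pair of distinct edges of $K_n$; the size bound $n+2|D\cup\{1\}|$ then drops out by counting ($|\mathcal{P}|=n$ rotations of $P$ together with the $2|D\cup\{1\}|$ paths coming from Lemma~\ref{Q paths}). Fix distinct edges $e,e'\in E(K_n)$ of types $x$ and $y$ respectively, and consider three cases.

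First, when $x,y\in F$, I would replay the proof of Lemma~\ref{rotaion gives sps} essentially verbatim, restricted to the sub-path $A_F$: condition 2 of Definition~\ref{def: f sep} supplies exactly two edges of each $F$-type in $P$ and condition 3 gives~\ref{GP:3} on $A_F$, which are the only ingredients that argument uses. The extra distance-$\tfrac{n}{2}$ clause from~\ref{EGP:3} never arises because $n$ is odd.

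Next, when $x=y\in D$, I argue via rotation sets. Writing $u_1,\dotsc,u_m$ for the starting vertices of the $x$-type edges of $P$, an arbitrary $x$-type edge $(v,v+x)$ of $K_n$ lies in $P_i$ iff $i\equiv v-u_j\pmod n$ for some $j$. Hence $e$ and $e'$ belong to the same subset of rotations iff their starting-vertex difference stabilises the multiset $\{u_1,\dotsc,u_m\}$ under translation on $\mathbb{Z}/n\mathbb{Z}$; the no-equally-spaced hypothesis precisely rules this out, so some $P_i\in\mathcal{P}$ separates them. Finally, when $x\neq y$ and WLOG $x\in D$, Lemma~\ref{Q paths} gives $e\in Q_x\cup Q_x'$, where both of these paths consist only of edges of types $x$ and $1$. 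If $y\notin\{x,1\}$ then automatically $e'\notin Q_x\cup Q_x'$ and we are done; otherwise $y=1$, and I instead use $Q_1,Q_1'\in\mathcal{D}$ (included because $1\in D\cup\{1\}$), which cover $e'$ and contain only $1$-type edges, so $e\notin Q_1\cup Q_1'$ since $x\neq 1$.

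I expect the hardest step to be making the second case rigorous: one needs to translate ``no equally spaced $x$-type edges'' into the statement that the shift-stabiliser of the multiset $\{u_1,\dotsc,u_m\}$ in $\mathbb{Z}/n\mathbb{Z}$ is trivial, using oddness of $n$ essentially. The other two cases are essentially bookkeeping built on top of Lemmas~\ref{rotaion gives sps} and~\ref{Q paths}.
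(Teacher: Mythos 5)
Your proof follows essentially the same route as the paper's: rotations of $P$ separate same-type pairs (via the no-equally-spaced hypothesis) and $F$-type pairs (via the $F$-separator conditions, which let you rerun the Lemma~\ref{rotaion gives sps} argument on $A_F$), while the fixing paths $Q_x,Q_x'$ from Lemma~\ref{Q paths} handle any pair involving a $D$-type or a $1$-type edge. Your case split is organised a little differently from the paper's, which first treats all same-type pairs and then all different-type pairs, but the underlying arguments coincide.

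One caution on the step you flag as hardest. The implication ``no equally spaced $x$-type edges $\Rightarrow$ the translation stabiliser of $S=\{u_1,\dots,u_m\}\subseteq\mathbb{Z}/n\mathbb{Z}$ is trivial'' is not true for an arbitrary $F$-separator path. If the stabiliser has order $h\geq 2$, then $S$ is a union of cosets of the unique subgroup of order $h$; a single coset is exactly the equally-spaced configuration, but a union of two or more cosets (which needs $m\geq 2h\geq 6$, since $n$ odd forces $h\geq 3$) is stabilised by a nontrivial shift without being equally spaced with common gap $n/m$. The paper's own proof makes the same implicit jump --- it iterates rotations by multiples of $d=cd(e,e')$ and concludes a full orbit lies in $S$, then identifies this orbit with the whole set of $x$-type edges. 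This is harmless for the paper's application because the path built in Section~\ref{sec: P construction} has at most three edges of any one type, but to prove Theorem~\ref{rotations plus fixings} exactly as stated for arbitrary $F$-separator paths one should strengthen the hypothesis to ``the set of $x$-type starting vertices of $P$ has trivial translation stabiliser for every $x$,'' which is what both your argument and the paper's actually use.
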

\begin{proof}
Consider a pair of edges $e,e' \in E(K_n)$. Suppose first that $e$ and $e'$ have the same type, $x$. Then $\mathcal{P}$ separates $e$ and $e'$. Indeed, if $e,e' \in P_i$ (the rotation of $P$ by $i$ vertices clockwise), and $cd(e,e')=d$ then one of $P_{i+d},P_{i+2d},P_{i+3d},\dotsc$ contains $e$ but not $e'$. Otherwise we would have a collection of $\frac{n}{d}$ edges $e_1,\dotsc,e_{\frac{n}{d}}$ (which includes $e$ and $e'$), all with type $x$ and such that $cd(e_i,e_{i+1})=d$. This cannot be, since we have no equally spaced $x$-type edges in $P$.

Suppose instead that $e$ has type $x$ and $e'$ has type $y$. If $x,y \in F$ then $e$ and $e'$ are separated by $\mathcal{P}$ since $P$ is an $F$-separator path. The paths $Q_1,Q_1'\in \mathcal{D}$ contain only $1$-type edges and together cover all $1$-types in $K_n$. Therefore if $x=1$ then one of these two paths separates $e$ and $e'$. So assume $x,y \neq 1$, and that $x \notin F$. Then $Q_x$ and $Q_x'$ contain only $x$-type and $1$-type edges and together cover all $x$-types in $K_n$. One of these paths separates $e$ and $e'$.
\end{proof}

To prove Theorem \ref{upper bound} it is left to find some path $P$ with small $D$ and no equally spaced edges of the same type.

\begin{thm}\label{existence of P}
When $n \equiv 3 \mod 6$ or $n \equiv 5 \mod 6$, there is an $F$-separator path $P$ for $K_n$, with no equally spaced $x$-type edges and with $|D\cup\{1\}|\leq \frac{1}{32}(5n+16\log_2n+167)$. Where $D =[\frac{n}{2}] \setminus F$.
\end{thm}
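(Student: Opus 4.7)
The strategy is to construct $P$ explicitly so that each type $x \in F$ appears exactly twice, with the two occurrences placed at a pre-selected clockwise distance $d_x$, and with the assignment $x \mapsto d_x$ injective on $F$. Condition \ref{GP:3} then follows immediately from injectivity, and since each type in $F$ appears exactly twice, the ``no equally spaced same-type edges'' requirement reduces to $d_x \neq n/2$ for all $x \in F$, which is automatic for odd $n$. Thus the problem is reduced to (i) choosing a partition $F \sqcup D$ of $[\tfrac{n-1}{2}]$ with $|D|$ small together with an injective distance template on $F$, and (ii) showing that the resulting ``type-and-distance'' prescription is realisable as a vertex-disjoint path in $K_n$.

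I would proceed in three stages. First, I fix the partition and the template: write each type $x$ as $x = qk + r$ for a suitable block-size $k$, and set $d_x$ by a formula that is injective by construction on the classes where it is defined, with the remaining types absorbed into $D$. The hypothesis $n \equiv 3 \pmod 6$ or $n \equiv 5 \pmod 6$ enters here, ensuring that the template stays inside $[1,\tfrac{n-1}{2}]$ without forced collisions; in the excluded residue class $n \equiv 1 \pmod 6$ additional collisions would inflate $|D|$. A careful accounting of the exceptional types contributes the leading $\tfrac{5n}{32}$ term in the bound. Second, I realise the prescription as an honest path by building $P$ out of short local blocks --- each block a sub-path contributing two edges of a single type in $F$ at the prescribed distance --- and splicing consecutive blocks together using a small number of ``bridge'' edges whose types are declared to lie in $D$. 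Third, I would finish with a clean-up stage that resolves any final obstructions (collisions at the boundaries between blocks, or leftover vertices not yet visited) by demoting a handful of types into $D$; an iterative/probabilistic swap argument in which each round fails with a controllably small probability gives a $O(\log n)$ contribution, accounting for the $16 \log_2 n$ term.

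The main obstacle is the second stage. The global constraints of vertex-disjointness of the traversal and injectivity of the distance family $\{d_x\}$ interact in a subtle way with the cyclic structure of $K_n$: a local choice made when extending $P$ by one block can force a downstream collision of two distances, or a vertex to be revisited, many blocks later. The congruence conditions on $n$ are used here, to guarantee that the block lengths fit together without cumulative drift around the cycle and so that the bridges can be placed at predictable positions. Once $P$ is produced and verified to be an $F$-separator path with no equally spaced same-type edges, the stated bound on $|D \cup \{1\}|$ follows by summing the defects incurred across the three stages, and Theorem \ref{rotations plus fixings} converts $P$ into a separating path system of size $n + 2|D \cup \{1\}|$, proving the theorem and, via the computation $n + 2 \cdot \tfrac{5n+16\log_2n+167}{32} = \tfrac{21n+16\log_2n+167}{16}$, also Theorem \ref{upper bound} up to the small additive constant needed to cover $n$ outside the two residue classes.
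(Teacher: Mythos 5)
Your high-level reduction — prescribe a distance $d_x$ for each type $x$, make the map $x\mapsto d_x$ injective so that \ref{GP:3} is automatic, and then realise the prescription as a path — is a reasonable reformulation, but the proposal leaves the decisive step unproven, and the way it is sketched would not succeed. The paper's construction does \emph{not} put the two edges of a given type inside a short local block: it uses a global matching $M_0$ containing one edge of each type, together with two auxiliary edge sets $R$ (the largest types) and $B$ (mid-range types) each supplying the \emph{second} copy of a type, so that the two copies of any type are far apart in the path and the path visits nearly every vertex via the $M_0$ ``spine''. The key invariant is a \emph{crossing number}: each edge of $R\cup B$ lies in a unique rotated matching $M_i$, and the construction forces these $i$'s to be pairwise distinct (multiples of $3$ for $B$, non-multiples of $3$ for $R$), which is exactly where the restriction $n\not\equiv 1\pmod 6$ enters. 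By contrast, the block scheme you sketch — ``each block a sub-path contributing two edges of a single type at the prescribed distance, spliced by bridges'' — has a structural problem: if the two $x$-type edges of a block are near each other along the path, then for large $d_x$ the bridge joining them must itself be a long edge, and since $d_x$ must range injectively over essentially all of $[\frac{n-1}{2}]$, you will produce $\Theta(n)$ bridge edges whose types cannot all be absorbed into a small $D$. No formula for the $d_x$, no concrete block construction, and no argument for vertex-disjointness are actually given; you identify this as ``the main obstacle'' but do not resolve it, and that obstacle is the whole theorem.

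Two further points. First, the $16\log_2 n$ term in the bound is not probabilistic in origin: the paper obtains it from a purely deterministic halving argument when connecting the paths emanating from $T^+$ into $U^-$ (the connector families $C_0,C_1,\dotsc,C_\ell$ with $\ell\leq\lceil\log_2(\tfrac{x_b-11}{4})\rceil+1$). An ``iterative/probabilistic swap argument'' is not a substitute for this unless you specify the random process and prove the failure probabilities, which you have not done. Second, your claim that $n\equiv 1\pmod 6$ is excluded because of ``template collisions inflating $|D|$'' is not quite the reason: the congruence is needed so that the crossing number $m=\tfrac{n-7}{2}$ of the first $R_2$ edge is not a multiple of $3$, keeping the $R$-crossing-numbers disjoint from the $B$-crossing-numbers (all multiples of $3$) and away from $0$. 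This is a sharper, structural use of the congruence rather than a bound on collision counts. Finally, your closing arithmetic $n + 2\cdot\tfrac{5n+16\log_2 n+167}{32}=\tfrac{21n+16\log_2 n+167}{16}$ only yields Case~1 of Theorem~\ref{upper bound}; the constant $232$ in that theorem comes from the additional vertex-attachment cases, so that part of the proposal is fine but incomplete as stated.
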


The construction of this path is rather involved and makes up the bulk of the argument, the proof can be found in Section \ref{sec: P construction}. We first show how to use this result to get separating path systems for all sufficiently large values of $n$. The methods for extending paths that are used here are similar to those used in the proof of Theorem \ref{upper bound p+1}.

\begin{repthm}{upper bound}
For $n \geq 44$, there exists a separating path system for $K_n$ with size at most
\begin{equation*}
    \frac{21n+16\log_2n+232}{16}.
\end{equation*}
\end{repthm}
\begin{proof}
\textbf{Case 1:} $n$ is odd and $\frac{n-1}{2}$ is not a multiple of 3.\\
Let $P$ be the path from Theorem \ref{existence of P}, and $D$ its associated set of badly behaved edge types. Then by \ref{rotations plus fixings} there is a separating path system for $K_n$ with size at most
\begin{equation*}
    n + 2\cdot\frac{5n+16\log_2n+167}{32}.
\end{equation*}

\textbf{Case 2:} $n-1$ is odd and $\frac{n-2}{2}$ is not a multiple of 3.\\
Consider the complete graph on $n-1$ vertices formed by removing vertex $v$ from $K_n$.
Then $K_{n-1}$ fits the conditions for Case 1, let $P$ and $D$ be as in Case 1. Use \ref{rotations plus fixings} to give a separating path system for $K_{n-1}$ of the form $\mathcal{P}\cup\mathcal{D}$, where $\mathcal{P}$ is the rotations of $P$ and $\mathcal{D} = \{Q_x,Q_x' : x \in D\cup\{1\} \}$. Then $|\mathcal{P}\cup\mathcal{D}| \leq n-1 + 2|D\cup\{1\}|$ where
\begin{equation*}
    |D\cup\{1\}| \leq \frac{5(n-1)+16\log_2(n-1)+167}{32}.
\end{equation*}

Let $P_i$ denote the rotation of $P$ by $i$ vertices clockwise on $K_{n-1}$, and similarly $w_i$ for the vertex of $K_{n-1}$ which is $i$ clockwise vertices on from $w$. Let $u$ be an endpoint of the path $P=P_0$. Then the family $\mathcal{P'}\cup\mathcal{D}$ is a separating path system for $K_n$ where $\mathcal{P'}= \{ P_i \cup \{(u_i,v)\} : i \in [0,n-2]\}$.

Indeed, any pair of edges from $E(K_{n-1})$ are separated since they are separated by $\mathcal{P}\cup\mathcal{D}$. So let $e=(w,v)$ for some $w \in V(K_{n-1})$, and consider any edge $e' \in E(K_{n-1})$. The edge $e$ only appears in one path from $\mathcal{P'}\cup\mathcal{D}$, namely the path $P_i\cup \{(u_i,v)\}$ where $u_i=w$. Therefore if $e'$ appears in any other path, then we are done. Clearly this is the case if $e'$ has type $x\in D$. Suppose that $e'$ has type $x \in F$, then by definition $P$ must have two $x$-type edges, therefore $e'$ appears in two rotations of $P$.

Finally if $e'$ is also an edge at $v$, then clearly it is not contained in the path $P_i\cup \{(u_i,v)\}$, and the two edges must be separated.

Note that $|\mathcal{P'}|=|\mathcal{P}|=n-1$, and $|\mathcal{D}|=2|D\cup\{1\}|$.

\textbf{Case 3:} $n$ is odd and $\frac{n-1}{2}$ is a multiple of 3.\\
Consider the complete graph on $n-2$ vertices given by removing vertices $v$ and $v'$ from $K_n$. Clearly $K_{n-2}$ satisfies the conditions for Case 1, so let $\mathcal{P}\cup\mathcal{D}$ be the corresponding separating path system for $K_{n-2}$ given by Theorem \ref{rotations plus fixings}. We will take each path from $\mathcal{P}$ and adapt it to a path on $K_n$, we will also add some edge types to the set $D$ and create additional paths for our family this way.

Let $u$ be any endpoint of $P$. Select some edge $g \in P$ such that if $x_g$ is the edge type of $g$, then $P$ contains at least one other edge with type $x_g$. Set $g=(p,q)$ for $p,q \in V(K_{n-2})$, and let $g_i = (p_i,q_i)$ be the rotation of $g$ in $K_{n-2}$ by $i$ vertices. We fix the new paths $P'_i=(P_i \setminus \{g_i\}) \cup \{(p_i,v),(v,q_i),(u_i,v')\}$, and let $\mathcal{P'}=\{P'_i : i \in [0,n-3] \}$.

If there are edges $h, h' \in P$ with the same edge type $x_h \in F$, such that $cd(h,h')=x_g$ on $K_{n-2}$, then define $D'=D \cup \{x_g, x_h\}$. Note that there can be at most one such pair since $x_h \in F$. Otherwise define $D'=D \cup \{x_g\}$. Let $\mathcal{D'}= \{Q_x,Q_x' : x \in D'\cup\{1\} \}$ be the family of fixing paths on each type in $D'$ (such paths described in \ref{Q paths}). Then $\mathcal{P'}\cup\mathcal{D'}$ is a separating path system for $K_n$.

Let $e,e'$ be any two edges in $K_n$. If $e,e' \in E(K_{n-2})\setminus\{g_i : i \in [n-2]\}$ then the edges are separated by $\mathcal{P}\cup\mathcal{D}$ and hence by $\mathcal{P'}\cup\mathcal{D'}$. Suppose $e'=g_i$ for some $i \in [n-2]$, and let $x_e$ be the edge type of $e$. Since $x_g \in D'$ we have that the path $Q \in \{Q_{x_g}, Q_{x_g}'\} \subseteq \mathcal{D'}$ contains the edge $g_i$. The path $Q$ separates $g_i$ from $e$ unless $x_e=x_g$ or $x_e=1$. If $x_e=1$ then one of $Q_1,Q_1' \in \mathcal{D'}$ separates the pair. If $x_e=x_g$ then, since there is still an $x_g$-type edge in $P'$ and there are no equally spaced edge types, $\mathcal{P'}$ separates the pair.

Suppose then that $e=(v,w)$ for some $w \in V(K_{n-2})$, and $e' \in E(K_{n-2})$, then the edges are separated if $e'$ is in some $\mathcal{D'}$ path, otherwise there is some other edge in $P_i$ with the same type as $e'$. The distance between these edges cannot be the same as the distance between the two edges at $v$ since $x_h \in D'$, therefore the edges must be separated by the rotations (as in \ref{rotaion gives sps}). If $e=(v,w)$ and $e'=(v,w')$, then let $i$ be such that $w_i=w'$. We have that if $e,e' \in P'_j$ then $e' \in P_{j+i}$, the only other edge at $v$ in  $P_{j+i}$ is $(v,w'_i)$. Therefore the edges are separated. Finally, if $e=(v',w)$ then note that it is the unique edge a path in $\mathcal{P'}$, since every other edge type either appears twice in $P'$ or in $D'$, there are no other unique edges. Therefore the edges are separated.

Note that $|\mathcal{P'}|=|\mathcal{P}|=n-2$, and $|\mathcal{D'}|=2|D'\cup\{1\}|=2(|D\cup\{1\}|+2)$.

\textbf{Case 4:} $n-1$ is odd and $\frac{n-2}{2}$ is a multiple of 3.\\
This case is very similar to Case 3. We consider the family $\mathcal{P}\cup\mathcal{D}$ from \ref{rotations plus fixings} on the $K_{n-3}$ obtained by removing the vertices $v,v'$ and $\bar{v}$ from $K_n$. Then we adapt the paths $P$ and add to the set $D$ to obtain a new family.

Let $u$ be any endpoint of $P$, and let $g=(p,q),g'=(p',q') \in P$ be edges with type $x_g$ and $x_g'$ respectively such that $x_g \neq x_g'$ and $P\setminus\{g,g'\}$ still contains an edge of type $x_g$ and edge of type $x_g'$. Set the new paths to be $P'_i= (P_i \setminus \{g_i,g_i'\}) \cup \{(p_i,v),(v,q_i),(p'_i,v'),(v',q'_i),(w,\bar{v})\}$ and $\mathcal{P'}=\{P'_i : i \in [0,n-4]\}$.

If there are edges $h, h' \in P$ both with edge type $x_h\in F$ such that the clockwise distance between them on $K_{n-3}$ is equal to $x_g$, then set $D' = D \cup \{x_g,x_g',x_h \}$. If there is also a pair of edges with type $x_h'\in F$ with clockwise distance equal to $x_g'$ then set $D' = D \cup \{x_g,x_g',x_h,x_h' \}$. Note that there can be at most one of each since $x_h,x_h' \in F$. Otherwise, set $D' = D \cup \{x_g,x_g'\}$. Then by the same reasoning as for Case 3, the family $\mathcal{P'}\cup\mathcal{D'}$ is a separating path system for $K_n$.

Note that $|\mathcal{P'}|=|\mathcal{P}|=n-3$, and $|\mathcal{D'}|=2|D'\cup\{1\}|=2(|D\cup\{1\}|+4)$.
\end{proof}

\subsection{Proof of Theorem \ref{existence of P}}\label{sec: P construction}
The aim now is to construct an $F$-separator path with many edge types in $F$. We will start by defining three sets of edges ($M_0$, $R$, and $B$) which together form a linear forest. This linear forest will almost follow \ref{GP:1}, \ref{GP:2}, and \ref{GP:3}. We will then add certain joining edges to connect our forest into a single path. The types associated to the special edges in the linear forest will end up in $F$ and the types associated to the joining edges will end up in $D$.

\textbf{Step 1: Defining the linear forest}

Let $n \in \mathbb{N}$ be such that $n \equiv 3 \mod 6$ or $n \equiv 5 \mod 6$. For the remainder of this section we label the vertices of $K_n$ slightly differently for ease of notation. First label any vertex $0$, from here label vertices clockwise following the order $1,2,\dotsc,\frac{n-1}{2},-\frac{n-1}{2},-(\frac{n-1}{2}-1),\dotsc,-2,-1$. Now we define three sets of edges from $K_n$ which we will combine to make an approximation of a generator path.

First we define a set containing one edge of each type,
\begin{equation*}
    M_0 = \left\{ (-i,i) : i \in \left[\frac{n-1}{2}\right] \right\}.
\end{equation*}
Note that $M_0$ is a maximal matching in $K_n$ and that the vertex $0$ is the only vertex which is not an endpoint of an edge in $M_0$. We also use $M_k$ to denote the rotation of $M_0$ in which vertex $k$ has no incident edges.

Next we define an edge set containing only the largest edge types. Let $R=R_1 \cup R_2$ where
\begin{equation*}
    R_1 = \left\{ \left(1,-\frac{n-3}{2}\right),\left(-1,-\frac{n-1}{2}\right) \right\},
\end{equation*}
and
\begin{equation*}
    R_2 = \left\{ \left(-3-2k, \frac{n-1}{2}-k\right) :
    0 \leq k \leq r-1 \right\},
\end{equation*}
where $r=\frac{n-7}{4}$ for when $\frac{n-1}{2}$ is odd, and $r=\frac{n-9}{4}$ otherwise. 

Note that $R_1$ consists of a $\frac{n-1}{2}$-type edge and a $\frac{n-3}{2}$-type edge. Further, note that $(-3,\frac{n-1}{2})$ has type $\frac{n-5}{2}$, and that if an edge $(-3-2k,\frac{n-1}{2}-k)$ is $x$-type, then $(-3-2(k+1),\frac{n-1}{2}-(k+1))$ is an $(x-1)$-type edge. Thus, $R$ contains the largest $r+2$ edge types.

Define $r(i) = \frac{n+2+i}{2}$ for every $i = -3-2k$ where $0 \leq k \leq r-1$, and $r(i) = -\frac{n-2-i}{2}$ for $i=-1,1$. Then we can write each edge in $R$ as $(i,r(i))$. Similarly let $r^{-1}(i)=2i-n-2$ for $i=\frac{n-1}{2}-k$ where $0 \leq k \leq r-1$, and $r^{-1}(i)= 2i+n-2$ for $i=-\frac{n-3}{2},-\frac{n-1}{2}$. Then we can also write $R$ edges in the form $(r^{-1}(i),i)$.

The contents of the final edge set depend on the edges in $R$ as well as the properties of $\frac{n-1}{2}$. We define this set so that it continues with the large edge types roughly where $R$ left off, containing approximately types $\frac{n}{4}$ down to $\frac{n}{8}$. Choose $i_b$ even and as large as possible such that $(-i_b,i_b+3)$ is an edge with odd type at most $\frac{n-1}{4}$. Set $b=(-i_b,i_b+3)$ and denote $x_b=2i_b+3$ as the edge type of $b$. Note that $b \in M_{-\frac{n-3}{2}}$ and
\begin{equation*}
    x_b \in \left\{ \left\lfloor\frac{n-1}{4}\right\rfloor, \left\lfloor\frac{n-1}{4}\right\rfloor-1, \left\lfloor\frac{n-1}{4}\right\rfloor-2, \left\lfloor\frac{n-1}{4}\right\rfloor-3 \right\}.
\end{equation*}
Then we can define the final edge set as 
\begin{equation*}
    B = \left\{ \left(-\frac{x_b-3}{2}+2k, \frac{x_b+3}{2}+k\right) : 0 \leq k \leq t-1 \right\},
\end{equation*}
where $t=\frac{x_b+1}{2}$.

Let $b(i) = \frac{3x_b+3+2i}{4}$ for each $i=-\frac{x_b-3}{2}+2k$ where $0 \leq k \leq t-1$. Also, let $b^{-1}(i)= -\frac{3x_b+3-4i}{2}$ for every $i=\frac{x_b+3}{2}+k$ where $0 \leq k \leq t-1$. Then we can write $B$ edges in the form $(i, b(i))$ and $(b^{-1}(i), i)$.

See Figure \ref{fig:L for Kn} for an example of the edge sets $M_0$, $R$, and $B$.

\begin{figure}[h!t]
    \centering
\begin{tikzpicture}
\begin{scope}[rotate=90]

    \tikzstyle{edge} = [draw,thick,-,black]
    \foreach \x in {0,...,17}{\node[draw,circle,fill=black,inner sep=1pt] (N\x) at ({-(\x)*360/35}:4.5cm) {};}
    \foreach \x in {1,...,17}{\node[draw,circle,fill=black,inner sep=1pt] (-N\x) at ({(\x)*360/35}:4.5cm) {};}
 
    \foreach \y in {0,...,17}{\node at ({-(\y)*360/35}:4.8cm) {$\y$};}
    \foreach \y in {1,...,17}{\node at ({(\y)*360/35}:4.9cm) {$-\y$};}

    \foreach \x in {1,...,17}{\draw[edge] (N\x) -- (-N\x);}
    
    \draw[edge,red] (N1) -- (-N16);
    \draw[edge,red] (-N1) -- (-N17);
    \draw[edge,red] (-N3) -- (N17);
    \draw[edge,red] (-N5) -- (N16);
    \draw[edge,red] (-N7) -- (N15);
    \draw[edge,red] (-N9) -- (N14);
    \draw[edge,red] (-N11) -- (N13);
    \draw[edge,red] (-N13) -- (N12);
    \draw[edge,red] (-N15) -- (N11);
    
    \draw[edge,cyan] (-N2) -- (N5);
    \draw[edge,cyan] (N0) -- (N6);
    \draw[edge,cyan] (N2) -- (N7);
    \draw[edge,cyan] (N4) -- (N8);

    \node at (180:2cm) {\textcolor{red}{$R$}};
    \node at (-45:3.6cm) {\textcolor{cyan}{$B$}};
\end{scope}
\end{tikzpicture}
    \caption{The linear forest $L$ for $n=35$}
    \label{fig:L for Kn}
\end{figure}
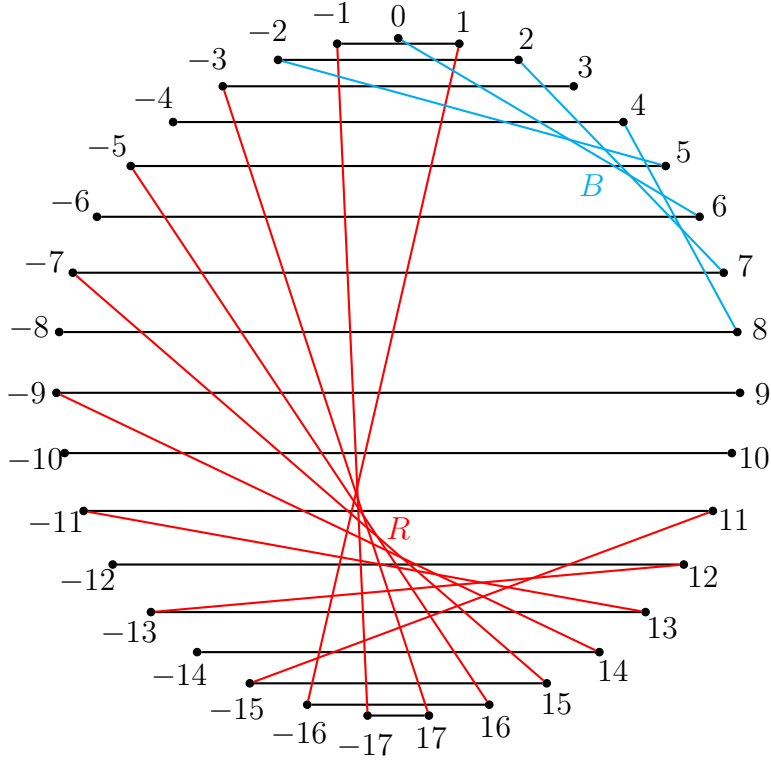

Let $L = M_0 \cup R \cup B$ denote this collection of edges. Our task now is to extend $L$ to a path. In order to do this we must have that $L$ is acyclic, and that the maximum degree of any vertex is $2$.

\begin{claim}
$L$ is a linear forest.
\end{claim}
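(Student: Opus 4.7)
The claim asserts two properties of $L$: maximum degree at most $2$, and acyclicity. The plan is to verify these separately. The degree bound reduces to checking that $M_0$, $R$, $B$ are each matchings and that $R$ and $B$ share no vertex; combined with the fact that $M_0$ covers every vertex except $0$ exactly once, this gives $\Delta(L)\le 2$, and in fact $0$ has degree exactly $1$ (contributed by $B$).

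The matching properties unpack directly from the definitions. For $R$: the endpoints of $R_1$ are $\{\pm 1, -\tfrac{n-1}{2}, -\tfrac{n-3}{2}\}$ and the endpoints of $R_2$ are the negative odd numbers $\{-3, -5, \dots, -(1+2r)\}$ together with $\{\tfrac{n-1}{2}-k: 0\le k\le r-1\}$; the bound $1+2r\le \tfrac{n-5}{2}$ from the definition of $r$ separates these by magnitude, so all the $R$-endpoints are distinct. For $B$: the left endpoints lie in $\bigl[-\tfrac{x_b-3}{2}, \tfrac{x_b+1}{2}\bigr]$ while the right endpoints lie in $\bigl[\tfrac{x_b+3}{2}, x_b+1\bigr]$, two disjoint intervals. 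The key disjointness $V(R)\cap V(B)=\emptyset$ uses that every vertex of $B$ has absolute value at most $x_b+1\le \tfrac{n+3}{4}$, which is strictly less than $\tfrac{n-1}{2}-r+1$, the smallest absolute value among $R$'s right endpoints; while $R$'s left endpoints are odd, $B$'s left endpoints are even --- crucially using that $x_b = 2i_b+3\equiv 3\pmod 4$ because $i_b$ was chosen even --- and $B$'s right endpoints all exceed $1$.

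For acyclicity, the plan is to exploit the structure of two overlapping matchings. Any cycle in $L$ must alternate between $M_0$-edges and $R\cup B$-edges. Contracting each $M_0$-edge (identifying $v$ with $-v$) yields a quotient multigraph $G'$ on $\{0,1,\dots,\tfrac{n-1}{2}\}$ whose edges are the images of $R\cup B$; cycles and loops in $G'$ correspond exactly to cycles in $L$. Loops are ruled out directly: an $R_2$-loop would force $\tfrac{n-7}{6}\in\mathbb{N}$, which our hypothesis $n\equiv 3,5\pmod 6$ excludes, and $R_1$- and $B$-loops are ruled out by direct inspection of the endpoint formulas. For longer cycles, I would process edges of $G'$ in stages: first show the subgraph $G'_R$ (from $R$ only) is a forest by tracing how the $R_2$-edges form a small collection of paths and how $R_1$'s two edges at vertex $1$ link two of them; then show that each $B$-edge has the left endpoint $\lvert{-\tfrac{x_b-3}{2}+2k}\rvert$ sitting at an even number in $[0,\tfrac{x_b+1}{2}]$, which is disjoint from the image of $V(R)$ in $G'$ and hence isolated in $G'_R$, so adding the $B$-edge either merges two components of the current graph or attaches a new vertex, never closing a cycle.

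The main obstacle is the last step: when two $B$-edges share the same left endpoint in $G'$ --- which occurs whenever $k+k'=\tfrac{x_b-3}{2}$, since then $\lvert-\tfrac{x_b-3}{2}+2k\rvert=\lvert-\tfrac{x_b-3}{2}+2k'\rvert$ --- one must verify that their right endpoints lie in different components of the partial graph at that stage. This amounts to a careful case analysis of the $R_2$-structure, tracing which right endpoints of $B$-edges fall into which $R$-component. I expect this bookkeeping to be the most technical part of the argument.
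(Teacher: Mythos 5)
Your degree argument is essentially the paper's: you observe that $M_0$ is a (near-)perfect matching, that $R$ and $B$ are matchings, and then verify $V(R)\cap V(B)=\emptyset$ using the magnitude bound $x_b+1\le\frac{n+3}{4}<\frac{n-1}{2}-r+1$ together with the odd/even split between the small $R$-endpoints and the small $B$-endpoints. That part is correct and lines up with Section \ref{sec: P construction}.

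For acyclicity, your contraction to the quotient multigraph $G'$ on $\{0,\dotsc,\frac{n-1}{2}\}$ is a genuinely different and rather clean reformulation (the lift from a $G'$-cycle to an $L$-cycle does work once you invoke the matching structure, though you should say why the signs can be chosen consistently). You correctly dispose of loops, and the observation that $n\equiv 3,5\pmod 6$ is exactly what forbids an $R_2$-loop matches the paper's use of ``$m$ is not a multiple of $3$''. However, the heart of the acyclicity argument is left open. Your claim that adding a $B$-edge ``either merges two components or attaches a new vertex, never closing a cycle'' is only valid for the \emph{first} $B$-edge incident to a given contracted left endpoint $|{-\frac{x_b-3}{2}+2k}|$. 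As you yourself note, for each even value in $[2,\frac{x_b-3}{2}]$ there are two $B$-edges sharing that left endpoint in $G'$, and for the second one you must rule out that its right endpoint is already connected to the first's right endpoint via $R$-paths. You defer this to ``careful bookkeeping'' without carrying it out, and it is exactly where the danger lies (both right endpoints can indeed be odd and hence carry $R_2$-edges), so this is a real gap, not a routine verification.

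The paper sidesteps that case analysis entirely with a monotonicity argument: from any $B$-edge $(i,b(i))$ in a putative cycle $C$, following $C$ in the direction of $b(i)$, the walk immediately leaves $J\cup J'$ into $I'$, and thereafter alternates $M_0$-steps with $R$-steps that keep it inside $I'\cup(-I')$, whose vertices all have absolute value exceeding $|i|$. So the walk can never revisit $i$, contradiction. If you want to finish your contracted-graph version, the analogous observation is that in $G'$ the $R_2$-edges always step from a small odd vertex $3+2k$ to a large vertex $\frac{n-1}{2}-k$, and the two vertices of $G'_R$ adjacent to a given $B$-right-endpoint $u$ lie on strictly increasing trajectories in $[\frac{n-1}{2}-r+1,\frac{n-1}{2}]$ that cannot meet; that is the content you would need to make ``bookkeeping'' into a proof.
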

\begin{proof}
First we consider the degree condition. Observe that $M_0$ is a maximal matching and therefore no $M_0$ edges share vertices. It is clear from the constructions of $R$ that no two $R$ edges share a vertex, similarly no two $B$ edges share a vertex. This means all edges of high degree must be an endpoint of an edge in each of $M_0,R$, and $B$. As usual we write $[a,b] = \{c \in \mathbb{N}: a \leq c \leq b\}$ where $a$ and $b$ are non-negative integers with $a\leq b$. We also write $[-a,b] = \{ c \in \mathbb{Z} : -a \leq c \leq b\}$ where $a$ and $b$ are non-negative integers, and $[-a,-b] = \{ c \in \mathbb{Z} : -a \leq c \leq -b\}$ where $a$ and $b$ are non-negative integers and $a\geq b$.

Let $I_1 = [-2,1] = \{-2,-1,0,1\}$ and $I_2=[-\frac{n-5}{2},-3]$, then each $R$ edge has one vertex in the set $I = I_1 \cup I_2$, moreover this vertex is odd. Let $I_1' = [-\frac{n-1}{2},-\frac{n-3}{2}]$ and $I_2'=[\frac{n+9}{4},\frac{n-1}{2}]$, then each $R$ edge has one vertex in the set $I'=I_1' \cup I_2'$. Similarly, let $J=[-\frac{x_b-3}{2},\frac{x_b+1}{2}]$ and $J'=[\frac{x_b+3}{2},x_b+1]$, each $B$ edge has exactly one vertex in $J$, which is even, and one vertex in $J'$.

Using the fact that all $R$ endpoints in $I$ are odd and every $B$ endpoint in $J$ is even together with the fact that $I\cap J'=\emptyset$, we have that no vertex in $I$ is the endpoint of both a $B$ and an $R$ edge. Then the only candidates for a vertex of high degree must be found in $I'$. Note first that $I_1' \cap (J\cup J')= \emptyset$, so any high degree vertex must come from $I'_2$. The largest vertex which is also an endpoint in $B$ is $x_b+1$. Recall that $x_b \leq \frac{n-1}{4}$, therefore $x_b+1 \leq \frac{n+3}{4} < \frac{n-1}{2}-k$ for all $0 \leq k \leq r-1$. Therefore there are no vertices of degree greater than $2$ in $L$.

It is left to show that $L$ is acyclic. Suppose for a contradiction that $C$ is a cycle in $M_0\cup R$, clearly $C$ must alternate between $M_0$ edges and $R$ edges. Let $e=(-i,i)$ be the edge in $C\cap M_0$ such that $i\in [\frac{n-1}{2}]$ is maximal. Let $r^+,r^- \in C \cap R$ be edges with an endpoint at $i$ and $-i$ respectively. Observe that if $i=\frac{n-1}{2}$ then $r^+=(-3, \frac{n-1}{2})\in R_2$ and so $(-3,3)\in C$. Since $3 \notin I\cup I'$ there is only one edge in $M_0 \cup R$ at $3$, meaning $C$ cannot be a cycle. Further note that the edge $(-\frac{n-3}{2},\frac{n-3}{2})$ is in the same path as $(-\frac{n-1}{2},\frac{n-1}{2})$ by edges in $R_1$, therefore $i < \frac{n-3}{2}$. This means that $r^+,r^- \in R_2$, so let $r^+=(r^{-1}(i),i)$ and $r^-=(-i,r(-i))$. Suppose that $|r^{-1}(i)|<r(-i)$, then after $r^+$ the cycle $C$ must follow an $M_0$ edge to $\ell=|r^{-1}(i)|$ where there must be another $R$ edge $(r^{-1}(\ell),\ell)$ for $C$ to continue. Since $\ell < r(-i)$, the construction of $R_2$ means we have $r^{-1}(\ell) < -i$. Since all $R$ vertices in $I_2$ are negative this means $|r^{-1}(\ell)|>i$. The next edge in $C$ after $(r^{-1}(\ell),\ell)$ must be in $M_0$, so we have that $(r^{-1}(\ell),|r^{-1}(\ell)|) \in C$ where $|r^{-1}(\ell)|>i$ contradicting the maximality of $i$. The case where $|r^{-1}(i)|>r(-i)$ is analogous.

So every cycle must contain a $B$ edge. Again, for a contradiction, let $C$ be a cycle in $L$, then $C$ must alternate between $M_0$ edges and $R\cup B$ edges. Let $(i,b(i))\in B\cap C$ and note that by definition of $B$ we have $|i|<b(i)$. The edge $(-b(i),b(i))$ must appear in $C$. Since $b(i) \in J'$ we have that $-b(i) \notin J\cup J'$, this means there is no $B$ edge at $-b(i)$. So there must be an $R$ edge at $-b(i)$ to continue the cycle, in particular we must have $-b(i) \in I$. In order to have $C$ be a cycle, we must be able to continue in this direction along $C$ and end up back at the vertex $i$.

Let $\ell = r(-b(i))$, then the edge $(-b(i),\ell)$ must be in $C$. Note that $\ell \in I'$ and therefore $\ell > |i|$ since $i \in J$. $C$ must continue after $(-b(i),\ell) \in R$ with the edge $(-\ell,\ell)\in M_0$. Since $-\ell \neq i$ we have not connected $C$. The next edge must be from $R\cup B$, and since $\ell \notin J \cup J'$ this must be another $R$ edge. In particular $(-\ell, r(-\ell))$. Note that as $r(-\ell)\in I'$ we must have that $r(-\ell) > |i|$. Again, the next edge in $C$ must be $(-r(-\ell), r(-\ell))\in M_0$. Since $r(-\ell) > |i|$ we have $-r(-\ell) \neq i$ and we have not completed $C$.

We can continue with this same argument, but at each stage we go through a vertex in $I'$ then to a vertex in $-I'$ via $M_0$, then back to $I'$ with an $R$ edge. This means we can never get back to the vertex $i$ since $|i|$ is smaller than all vertices in $I'$.
\end{proof}

\begin{claim}\label{MRB sat 13}
$L$ satisfies \ref{GP:1} and \ref{GP:3}. Moreover, the sub-forest of $L$ which contains all edges of type matching those in $R\cup B$ satisfies \ref{GP:2}.
\end{claim}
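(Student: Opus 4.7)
The plan is to verify each condition separately, with the substantial work in \ref{GP:3}.

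\ref{GP:1} follows immediately from the matching: the $M_0$-edge $(-i,i)$ has type $\min(2i,n-2i)$, and as $i$ ranges over $[\tfrac{n-1}{2}]$ this is a bijection onto the type set (even types coming from $i \leq \lfloor n/4\rfloor$, odd types from $i > \lfloor n/4\rfloor$), so $M_0 \subset L$ already contains one edge of every type. For the sub-forest $L^\star$ consisting of those edges of $L$ whose type appears in $R \cup B$: the types carried by $R$ are the top $r+2$ values (all $\geq \tfrac{n+1}{4}$) while the types carried by $B$ lie in $[\tfrac{x_b+1}{2}, x_b]$ with $x_b \leq \tfrac{n-1}{4}$, so the two type-sets are disjoint. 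For each such type $x$, $L^\star$ contains exactly the $M_0$-edge of type $x$ plus the corresponding $R$- or $B$-edge --- two edges per type --- so no type of $L^\star$ occurs once or more than twice, giving \ref{GP:2}.

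For \ref{GP:3}, since only types in $R \cup B$ occur twice in $L$, the condition reduces to showing the clockwise distance $d_x$ between the two same-type edges is pairwise distinct across $x \in R \cup B$. I would first tabulate starting vertices: the $M_0$-edge of type $x$ starts at $-x/2$ if $x$ is even and at $(n-x)/2$ if $x$ is odd; each $R_2$-edge $(-3-2k,\tfrac{n-1}{2}-k)$ of type $\tfrac{n-5}{2}-k$ starts at $\tfrac{n-1}{2}-k$; the $R_1$-edges of types $\tfrac{n-1}{2}$ and $\tfrac{n-3}{2}$ start at $-\tfrac{n-3}{2}$ and $-\tfrac{n-1}{2}$ respectively; and each $B$-edge of type $x_b-k$ starts at $-\tfrac{x_b-3}{2}+2k$. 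Substituting into $d_x = \min(|v-v'|,\, n-|v-v'|)$ yields explicit closed forms. For example, the $B$-family splits by the parity of $k$ into $d = \tfrac{n-3-3k}{2}$ (even $k$) and $d = \tfrac{3(k+1)}{2}$ (odd $k$), while the $R_2$-family splits similarly by the parity of $\tfrac{n-5}{2}-k$ into two arithmetic progressions in $k$ (corresponding to the two $M_0$-formulas).

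The main obstacle is then showing that none of these closed-form distances coincide. Intra-family distinctness follows from strict monotonicity in $k$ within each parity class. The delicate step is ruling out between-family collisions among the $R_2$-sequences, the two $B$-subsequences, and the two $R_1$-values. The hypothesis $n \equiv 3, 5 \pmod 6$ together with the admissible values of $x_b$ (odd and in a controlled window below $\tfrac{n-1}{4}$) controls the residues of $d_x$ modulo small integers and pins down the possible ranges of the sequences; in particular, $n\not\equiv 0\pmod 3$ is exactly what prevents the multiples-of-$3$ $B$-odd subsequence from aligning with the $R_2$-sequences. I would conclude by a case analysis on $n \bmod 12$ together with the four admissible values of $x_b$, reducing each sub-case to a direct arithmetic check that the two families have disjoint images.
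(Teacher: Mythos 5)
Your treatment of \ref{GP:1} and \ref{GP:2} is correct and runs along the same lines as the paper: $M_0$ supplies one edge of every type, the $R$ and $B$ type-sets are disjoint, and for each such type the sub-forest picks up exactly the $M_0$-edge plus the matching $R$- or $B$-edge. The route you propose for \ref{GP:3}, however, diverges from the paper's and, as written, rests on a false modular claim and an undercount of cases; it also stops at a plan rather than a completed verification.

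The paper introduces a single invariant that collapses your case analysis: the \emph{crossing number} of an edge, defined to be the type of the edge incident to vertex $0$ in the unique rotated matching $M_\ell$ containing it. The crossing number $c$ and the distance $|\ell|$ determine one another via $c = \min(2|\ell|, n-2|\ell|)$, so verifying \ref{GP:3} is equivalent to showing the $R\cup B$ edges have pairwise distinct, nonzero crossing numbers. The payoff is a clean closed form with no parity splitting at all: the $k$-th $B$-edge has crossing number exactly $3(k+1)$, and the $k$-th $R_2$-edge has crossing number $|m-3k|$ with $m=\tfrac{n-7}{2}$. Every $B$ crossing number is a multiple of $3$ while no $R$ crossing number is, because $m$ is not a multiple of $3$; this is precisely where the hypothesis $n\equiv 3,5\pmod 6$, equivalently $\tfrac{n-1}{2}\not\equiv 0\pmod 3$, enters, and the argument finishes in a line.

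Two concrete problems with your version. First, you assert that $n\not\equiv 0\pmod 3$ is what prevents the $B$-odd distances from colliding with the $R_2$-distances, but $n\equiv 3\pmod 6$ forces $n\equiv 0\pmod 3$, so your stated hypothesis fails for half of the admissible $n$. The condition that actually bites is $\tfrac{n-1}{2}\not\equiv 0\pmod 3$ (equivalently $n\not\equiv 1\pmod 6$), and it acts cleanly on crossing numbers rather than on the raw distances, whose residues mod $3$ are not uniform across your parity classes. Second, the $R_2$ distances do not split into just two arithmetic progressions in $k$: the absolute value in $|m-3k|$ introduces a sign change near $k\approx m/3$, so each of your parity classes splits further into a decreasing and an increasing sub-sequence, giving four families rather than two, and which of $\tfrac{|m-3k|}{2}$ or $\tfrac{n-|m-3k|}{2}$ gives the distance is governed by the parity of the crossing number, not directly by the parity of $k$ or of $\tfrac{n-5}{2}-k$. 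Your deferred case analysis on $n\bmod 12$ and $x_b$ would have to track all of this and also check disjointness against the two $R_1$ values; without the crossing-number reduction that is substantially more work than your sketch suggests, and with the mod-$3$ claim as stated it would not go through.
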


\begin{proof}
For \ref{GP:1}, note that $n$ is odd and $M_0$ contains exactly one edge of every type. Clearly then, $L$ contains at least one edge of each type.

Recall that each edge in $R$ has a different type from $[\lceil \frac{n-1}{4} \rceil,\frac{n-1}{2}]$, and that each $B$ edge has a different type from $[\frac{x_b+3}{2},x_b]$ where $x_b \leq \lfloor \frac{n-1}{4} \rfloor$. This means that there is no edge type that appears more than twice in $L$. Moreover, for any edges $e,e' \in L$ that have the same type, we have $e\in M_0$ and $e' \in R\cup B$. This means the restriction of $L$ to edges with type found in $R\cup B$ contains exactly two edges of each type, hence satisfies \ref{GP:2}.

Since $M_0$ contains exactly one of each edge type, the family of rotations of $M_0$ must cover the edges of $K_n$. In other words, any edge in $R \cup B$ is found in $M_i$ for some $i\in [-\frac{n-1}{2},\frac{n-1}{2}]$. Let $e\in M_0$ and $e'\in R\cup B$ both be $x$-type edges. Then $e'\in M_i$ for some $i$, and since $M_i$ is a rotation of $M_0$ by $|i|$ we must have that $cd(e,e')=|i|$. This means that \ref{GP:3} is equivalent to the conditions
\begin{equation*}
    M_0 \cap (R \cup B) = \emptyset \qquad \text{and} \qquad |(M_i\cup M_{-i}) \cap (L)| \leq 1
\end{equation*}
for every $i \in [\frac{n-1}{2}]$.

We associate to the edges of $K_n$ a number from $[0,\frac{n-1}{2}]$ called the \textit{crossing number}. Each $M_i$ (except $M_0$) contains a single edge $e_i$ at the vertex $0$, define the crossing number of all edges in $M_i$ to be the type of the edge $e_i$. We define the crossing number of edges in $M_0$ to be $0$. Note that by this definition the crossing number of edges in $M_i$ is the same as the crossing number of $M_{-i}$ edges, we also have that any pair $M_i,M_j$ with $|i|\neq|j|$ contain edges with different crossing numbers. Therefore, to show the conditions above hold, it is sufficient to show that no two edges from $R \cup B$ have the same crossing number, and that no edge in $R \cup B$ has crossing number $0$.

Consider the edge $e=(1,-\frac{n-3}{2}) \in R_1$, let $M_\ell$ be the matching containing $e$. Then $M_\ell$ must also contain the edge $(0,-\frac{n-5}{2})$ and hence $e$ has crossing number $\frac{n-5}{2}$. Similarly we see that $(-1,-\frac{n-1}{2})$ has crossing number $\frac{n-1}{2}$. Now, for edges in $R_2$ we first note that $(-3, \frac{n-1}{2})$ has crossing number $m=\frac{n-7}{2}$. Then observe that $(-3-2, \frac{n-1}{2}-1)$ has crossing number $m-3$, and similarly $(-3-2k, \frac{n-1}{2}-k)$ has crossing number $|m-3k|$. Note that $|m-3k|\neq |m-3k'|$ for any $k \neq k'$, so no edges in $R_2$ have the same crossing number. Further note that $m$ is not a multiple of $3$ since $\frac{n-1}{2}$ is not a multiple of $3$, this means no $R_2$ edge has crossing number $0$.

Since $r \leq \frac{n-7}{4}$ we have that $|m-3k|<\frac{n-5}{2}$ for all $0 \leq k \leq r-1$, meaning that there are no edges in $R_2$ with the same crossing number as an edge in $R_1$. In particular this means that no two edges in $R$ have the same crossing number, and no edge in $R$ has a crossing number which is $0$ or a multiple of $3$.

We now consider edges in $B$. Recall that the edge $b=(-\frac{x_b-3}{2}, \frac{x_b+3}{2}) \in M_{-\frac{n-3}{2}}$ and therefore has crossing number $3$. Note also that $(-\frac{x_b-3}{2}+2, \frac{x_b+3}{2}+1)$ has crossing number $6$ and in general the edge $(-\frac{x_b-3}{2}+2k, \frac{x_b+3}{2}+k)$ has crossing number $3(k+1)$ for $0\leq k\leq t-1$. Recall also that $t \leq \frac{n+3}{8}$, therefore $3(k+1) \leq \frac{n-1}{2}$ for all $n\geq 13$. In particular we have that no edges in $B$ have crossing number $0$, and clearly no two edges from $B$ have the same crossing number. We saw earlier that no edges in $R$ have a crossing number which is a multiple of $3$, since edges in $B$ only have crossing numbers which are multiples of $3$ we see that there are no two edges in $R \cup B$ with the same crossing number.
\end{proof}

The above tells us that the only edge types that fail the conditions of Definition \ref{gen path} are those that appear only once (and hence fail \ref{GP:2}). These are exactly the types that do not appear in $R \cup B$. In other words, let
\begin{equation*}
    F_L = \{x: \text{there is an } e\in R\cup B \text{ with edge type } x\}
\end{equation*}
be the set of edge types from edges in $R\cup B$, then $L$ is an $F_L$-separator. Note that $F_L$ is close to the set $[\frac{n}{8},\frac{n-1}{2}]$, possibly with a small number of elements missing.

We now must connect our $L$ into a single path without using too many edges with type in $F_L$.

\textbf{Step 2: Finding connecting edges to join the linear forest into a path}

We can do this by adding edges between endpoints of paths in $L$, but it is in our interests to keep these joining edges as short as possible to avoid using edges from $F_L$. In this step we will find a set of edges $C$ such that $L \cup C$ is a path and where the edge types in $C$ do not overlap too much with those in $F_L$. To do this, we need to know where the endpoints of paths in $L$ appear, and a little more about the behaviour of each path in $L$. We partition the non-zero vertices of $K_n$ into sets depending on their label. First set $T^+ = [x_b+1]$ , $M^+=[x_b+2,\frac{n-1}{2}-r]$ and $U^+=[\frac{n-1}{2}-r+1,\frac{n-1}{2}]$, then $T^-,M^-$ and $U^-$ contain the respective negative vertices (see Figure \ref{fig:segments of Kn}).

\begin{figure}[h!t]
    \centering
\begin{tikzpicture}
\begin{scope}[rotate=90]

    \tikzstyle{edge} = [draw,thick,-,black]
    \foreach \x in {0,...,17}{\node[draw,circle,fill=black,inner sep=1pt] (N\x) at ({-(\x)*360/35}:4.5cm) {};}
    \foreach \x in {1,...,17}{\node[draw,circle,fill=black,inner sep=1pt] (-N\x) at ({(\x)*360/35}:4.5cm) {};}
 
    \foreach \y in {0,...,17}{\node at ({-(\y)*360/35}:4.8cm) {$\y$};}
    \foreach \y in {1,...,17}{\node at ({(\y)*360/35}:4.9cm) {$-\y$};}

    \foreach \x in {1,...,17}{\draw[edge] (N\x) -- (-N\x);}
    
    \draw[edge,red] (N1) -- (-N16);
    \draw[edge,red] (-N1) -- (-N17);
    \draw[edge,red] (-N3) -- (N17);
    \draw[edge,red] (-N5) -- (N16);
    \draw[edge,red] (-N7) -- (N15);
    \draw[edge,red] (-N9) -- (N14);
    \draw[edge,red] (-N11) -- (N13);
    \draw[edge,red] (-N13) -- (N12);
    \draw[edge,red] (-N15) -- (N11);
    
    \draw[edge,cyan] (-N2) -- (N5);
    \draw[edge,cyan] (N0) -- (N6);
    \draw[edge,cyan] (N2) -- (N7);
    \draw[edge,cyan] (N4) -- (N8);

    \draw[edge,gray,dashed] (0:5.25cm) -- (180:5.5cm);
    \draw[edge,gray,dashed,shorten >=-1.5cm, shorten <=-1.5cm] ({-(8.5)*360/35}:4.5cm) -- ({(8.5)*360/35}:4.5cm);
    \draw[edge,gray,dashed,shorten >=-1.7cm, shorten <=-1.7cm] ({-(10.5)*360/35}:4.5cm) -- ({(10.5)*360/35}:4.5cm);

    \node at (45:5.5cm) {$T^-$};\node at (-45:5.5cm) {$T^+$};
    \node at (95:5.5cm) {$M^-$};\node at (-95:5.5cm) {$M^+$};
    \node at (135:5.5cm) {$U^-$};\node at (-135:5.5cm) {$U^+$};

\end{scope}
\end{tikzpicture}
    \caption{The segments of $L$ for $n=35$}
    \label{fig:segments of Kn}
\end{figure}
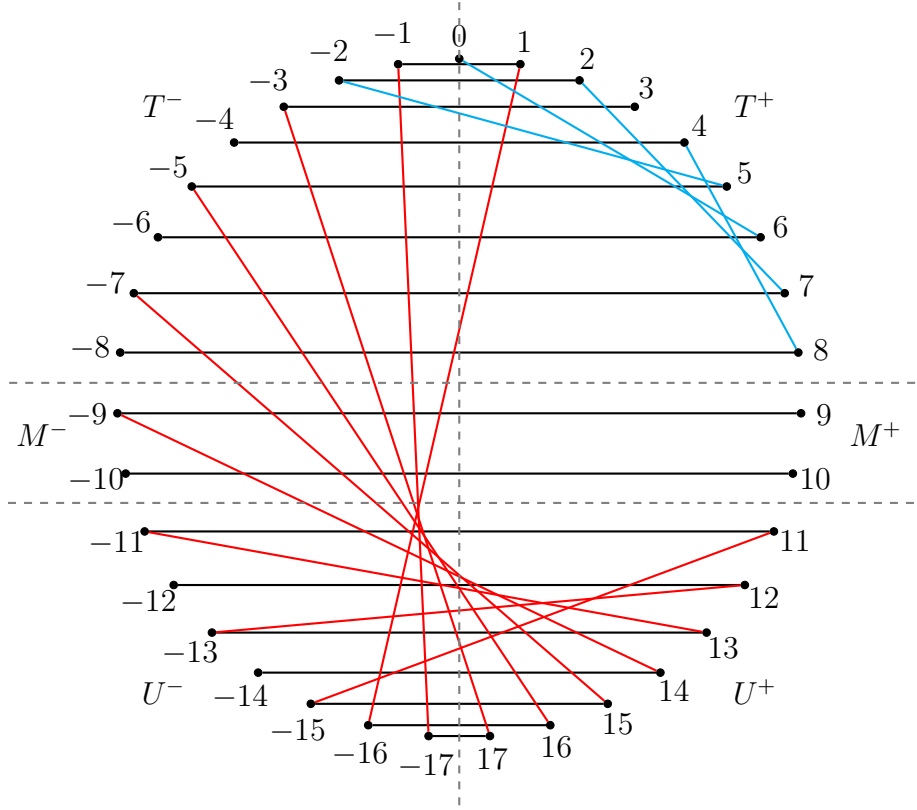

First consider vertices in $U^+$, note that each vertex in $U^+$ is an $I'$ vertex of an $R_2$ edge. Since these vertices also all have an incident $M_0$ edge we conclude that there are no endpoints in $U^+$.

Next $M^+$, since all vertices in $M^+$ are larger than $x_b+1$ and smaller than $\frac{n-1}{2}-r+1$ there can be no incident $B$ or $R$ edges here. Hence every vertex in $M^+$ is an endpoint of a path in $L$. Note that $|M^+|=\frac{n-1}{2}-r-x_b-1$, then due to the values of $x_b$ and $r$, we must have that $1 \leq|M^+|\leq 4$.

For $T^+$ note that every vertex in $J'\subseteq T^+$ is an endpoint of a $B$ edge, therefore the only vertices in $T^+$ with degree $1$ must be in $[\frac{x_b+1}{2}]$. In particular, by the construction of $B$, these endpoints must all be odd vertices since $-\frac{x_b-3}{2}+2k \in J$ is even for $0 \leq k \leq t-1$. Further note that there is an $R_1$ edge at vertex $1$, but this is the only $R$ edge with an endpoint in $T^+$. Together this means all odd vertices in $[3,\frac{x_b-1}{2}]$ are endpoints in $L$, this is $\frac{x_b-3}{4}$ vertices since $(i_b+3)-1=\frac{x_b+1}{2}$ is even.

For the negative side, we know that all odd vertices have an incident $R$ edge, so the only endpoints on this side must be even. For $U^-$ note that $-\frac{n-3}{2}$ and $-\frac{n-1}{2}$ are both vertices with edges from $R_1$, so the endpoints in $U^-$ are all even vertices in $[-\frac{n-5}{2},-\frac{n-1}{2}+r-1]$. For $M^-$ this means that all even vertices are endpoints. In particular each endpoint is from a path consisting of one edge and terminating in $M^+$, there are $\lfloor \frac{|M^+|}{2} \rfloor$ such vertices (this is at most $2$). Finally for $T^-$ we have that even vertices in $J$ are endpoints of $B$ edges, therefore the only vertices with degree $1$ are in $[-x_b-1,-\frac{x_b-3}{2}-2]$. This is $\frac{x_b+5}{4}$ vertices.

This information is summarised in the table below.

\begin{table}[h!]
\centering
\begin{tabular}{c|l|l|l}
Set & Range in which endpoints lie & Endpoint type & Number of endpoints \\ \hline
$\{0\}$ & - & all & $1$ \\
$U^+$ & - & none & $0$ \\
$M^+$ & $[x_b+2,\frac{n-1}{2}-r]$ & all & $1$ to $4$ \\
$T^+$ & $[3,\frac{x_b-1}{2}]$ & odd & $\frac{x_b-3}{4}$ \\
$U^-$ & $[-\frac{n-5}{2},-\frac{n-1}{2}+r-1]$ & even & $\lceil \frac{n-17}{8} \rceil$ to $\lfloor \frac{n-15}{8} \rfloor$ \\
$M^-$ & $[-\frac{n-1}{2}-r,-x_b-3]$ & even & $0$ to $2$ \\
$T^-$ & $[-x_b-1,-\frac{x_b-3}{2}-2]$ & even & $\frac{x_b+5}{4}$ \\
\end{tabular}
  \caption{Endpoints in each segment of $L$}
  \label{Table:1}
\end{table}

In total we have $\frac{n-2x_b+1}{2}$ vertices in $L$ with degree $1$ when $\frac{n-1}{2}$ is even, and $\frac{n-2x_b-1}{2}$ otherwise. This means $L$ consists of
\begin{equation*}
    \frac{n-2x_b+(-1)^{\frac{n-1}{2}}}{4}
\end{equation*}
paths. We now look at the behaviour of each path.

\begin{claim}\label{paths T- to T-}
Any path in $L$ with an endpoint in $T^-$ must have its other endpoint in $T^- \cup \{0\}$.
\end{claim}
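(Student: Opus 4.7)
The plan is to show that the $L$-path starting at any (even) endpoint $v\in T^-$ never leaves the set $W$ consisting of the even vertices of $T^+\cup T^-\cup\{0\}$ that are not in $J_{\text{mixed}}$, where $J_{\text{mixed}}\subseteq J$ denotes the $J$-endpoints of the $B$-edges $(-i_b+2k,\,i_b+3+k)$ with $k$ even. Since the only $L$-endpoints contained in $W$ are even vertices of $T^-$ (Table~\ref{Table:1}) and, when $0\in W$, the vertex $0$, this forces the other endpoint into $T^-\cup\{0\}$.

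First, by the degree argument used in the previous claim, both $M_0$ and $R\cup B$ are matchings, so the $L$-path at $v$ alternates between $M_0$-edges and edges of $R\cup B$. I would begin by noting that $v$ is even and its sole $L$-incident edge is the $M_0$-edge $(v,-v)$: all $R$-endpoints lying in $T^-$ are odd (they form $\{-1\}\cup\{-3,-5,\dotsc,-x_b\}$), and all $B$-endpoints lying in $T^-$ lie in $\{-i_b,-i_b+2,\dotsc,-2\}$, which is disjoint from the $T^-$-endpoint range $[-x_b-1,-i_b-2]$. The $M_0$-step takes $v$ to $-v\in[i_b+2,\,2i_b+4]\subseteq T^+$, still even.

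The heart of the proof is establishing that no $L$-edge with an endpoint in $W$ leaves $W$. This splits into three parts. For $M_0$-edges it suffices that $J_{\text{mixed}}$ is symmetric under $w\mapsto -w$ (it is, by inspection of the defining residue class). For $R$-edges, every $R$-endpoint lying in $T^+\cup T^-\cup\{0\}$ is odd ($\{1,-1\}\cup\{-3,-5,\dotsc,-x_b\}$) and so lies outside $W$. For $B$-edges, one writes $B$ as the union of edges indexed by odd $k$ (whose endpoints form $J_{\text{even}}\cup J'_{\text{even}}\subseteq W$, pairing even $J$-vertices with even $J'$-vertices) and those indexed by even $k$ (whose $J$-endpoints lie in $J_{\text{mixed}}=J\setminus J_{\text{even}}$, outside $W$, and whose $J'$-endpoints are odd, also outside $W$). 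With this closure property the $L$-component of $v$ is contained in $W$, and Table~\ref{Table:1} then shows that its second $L$-endpoint must be in $T^-\cup\{0\}$.

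The main obstacle is the $B$-edge closure. One has to verify that for $i_b$ even the edge $(-i_b+2k,\,i_b+3+k)$ is even-even precisely when $k$ is odd (so the partition $J=J_{\text{even}}\sqcup J_{\text{mixed}}$ cleanly separates $B_{\text{odd}}$ from $B_{\text{even}}$) and check explicitly that when $-v\in[i_b+2,\,2i_b+4]$ is even, the $B$-edge at $-v$ is of the even-even type. Both are short parity calculations, after which the closure argument runs uniformly and the conclusion follows.
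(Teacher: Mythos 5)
Your proof is correct, and it takes a genuinely different route from the paper's. The paper's argument is an explicit, iterative path trace: starting from the largest remaining even endpoint $a_2-2k$ in $T^-$, it follows the path edge-by-edge through a fixed pattern ($M_0$, then $B$, then $M_0$, then possibly $B$, $M_0$ again) and computes that it lands precisely at the smallest remaining even endpoint $a_1+2k$ (or at $0$ on a final step when $\tfrac{x_b+1}{8}\in\mathbb{Z}$), then shrinks the interval and repeats. Your proof instead sets up the invariant set $W$ (even vertices of $T^+\cup T^-\cup\{0\}$ outside $J_{\text{mixed}}$) and shows $L$-adjacency preserves $W$: since $i_b$ is even, $J_{\text{mixed}}=\{-i_b,-i_b+4,\dotsc,i_b\}$ is symmetric under negation so $M_0$-edges preserve $W$; every $R$-endpoint in $T^+\cup T^-\cup\{0\}$ is odd so $R$-edges never touch $W$; and the $B$-edges split by parity of $k$ into even--even edges (both endpoints in $W$) and even--odd edges (both endpoints outside $W$). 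The parity arithmetic all checks out, including the edge cases (the $T^-$ endpoints lie in $[-x_b-1,-i_b-2]$, disjoint from $J_{\text{mixed}}$; $J'$ is disjoint from $J_{\text{mixed}}$; $0\in W$ iff $i_b\equiv 2\pmod 4$). Combined with Table~\ref{Table:1}, which shows the only $L$-endpoints in $W$ are $0$ and the even $T^-$ endpoints, this proves the claim.

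What each approach buys: the paper's step-by-step trace is not gratuitous computation --- it identifies \emph{which} $T^-$ endpoint is paired with which ($a_2-2k$ with $a_1+2k$), and this exact pairing is what Claim~\ref{joining paths in T-} relies on to choose the $2$-type connectors $C_B$ and to conclude that the joined path has endpoints $-x_b-1$ and either $0$ or $-\tfrac{3x_b+7}{4}$. Your closure argument is cleaner for the claim as stated and makes the underlying parity structure transparent, but it does not by itself recover the explicit pairing, so carrying the construction forward would still require something like the paper's trace.
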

\begin{proof}
Recall that all endpoints in $T^-$ are in the set $[-x_b-1,-\frac{x_b-3}{2}-2]$, and that only even vertices are endpoints.
Set $a_1 = -x_b-1$ and $a_2 = -\frac{x_b-3}{2}-2$, so our interval of endpoints can be written as $[a_1,a_2]$. We will work in stages, checking the largest endpoint in our interval at each step. We will show that the path from this largest vertex terminates at the smallest endpoint in our interval (or at $0$). We then remove these endpoints from our interval and start again, checking the largest endpoint, continuing until we have found all the paths with endpoints in $T^-$. We will start with step $0$.

Step $0$, our interval is $[a_1,a_2]$ and we check the path at the largest vertex, $a_2 = -\frac{x_b-3}{2}-2$. We know there is an $M_0$ edge at $a_2$, so our path must go through the vertex $-a_2=\frac{x_b-3}{2}+2 = -\frac{x_b-3}{2}+2(t-1)$. From the construction of $B$, we see that there is a $B$ edge at this vertex which our path must follow next. This takes the path to the vertex $b(-a_2)=x_b+1$, where we must again follow an $M_0$ edge to $-(x_b+1)=a_1$. We know that $a_1$ is even and is the smallest vertex in our interval $[a_1,a_2]$, so the path terminates here.

Every remaining endpoint is an even vertex in $[-x_b+1,-\frac{x_b-3}{2}-4]$. Indeed, our original interval was $[-x_b-1,-\frac{x_b-3}{2}-2]$ and we found $-(x_b+1)$ and $-\frac{x_b+1}{2}$ to be endpoints of the first path. Since $-(x_b+1) +1$ and $-\frac{x_b+1}{2}-1$ are odd, they cannot be endpoints of a path in $L$. Hence our new interval is $[a_1 +2,a_2-2]$. This completes step $0$.

Step $k$, our interval is $[a_1+2k,a_2-2k]$, note that $k \leq \frac{x_b+1}{8}$. We check the largest vertex in our interval, $-v=a_2-2k$. We know $-v$ has an edge to $v$ via $M_0$, and since $v>\frac{x_b+3}{2}$ we have that $v\in J'$ has an incident $B$ edge. In particular this $B$ edge is $(-\frac{x_b-3}{2}+2(2k-1), \frac{x_b+3}{2}+2k-1)$ where $v=\frac{x_b+3}{2}+2k-1$.

We know that $b^{-1}(v)$ is even, if $b^{-1}(v)=0$ then the path must end here since there is only one edge at vertex $0$. Also note that when $b^{-1}(v)=0$ we have $k = \frac{x_b+1}{8}$ and hence our interval consists of a single vertex, therefore we must be on the final step. Assume then that $b^{-1}(v)\neq 0$. In this case we have that $b^{-1}(v) < 0$ since we are assuming $k < \frac{x_b+1}{8}$.

From $b^{-1}(v)$ we move along the path to $|b^{-1}(v)|$, since this is an even vertex and as $|b^{-1}(v)|<|-\frac{x_b-3}{2}|$, there must be another $B$ edge here. Since $b^{-1}(v)$ is even, there are exactly $|b^{-1}(v)|-1$ even vertices between $b^{-1}(v)$ to $|b^{-1}(v)|$ (not inclusive). This means there are $|b^{-1}(v)|-1$ $B$ edges between those at $b^{-1}(v)$ and $|b^{-1}(v)|$. In other words, the $B$ edge at $|b^{-1}(v)|$ must be $(-\frac{x_b-3}{2}+2(\ell-1), \frac{x_b+3}{2}+\ell-1)$ where $\frac{x_b+3}{2}+\ell-1=v+|b^{-1}(v)|$. Now, recall that $|b^{-1}(v)|=\frac{x_b-3}{2}-2(2k-1)$, and that $v=\frac{x_b-3}{2}+2+2k$. Using these we get $\frac{x_b+3}{2}+\ell-1=x_b+1-2k$. We then follow the $M_0$ edge at $x_b+1-2k$ to get to $-(x_b+1-2k)= a_1 +2k$.

Clearly then $a_1 +2k$ is even and inside the interval $[a_1+2k,a_2-2k]$. So it must be an endpoint in $L$, and therefore our path ends in $T^-$ as claimed. We remove vertices from our interval to give the new interval of $[a_1+2k+2,a_2-2k-2]$.

We can repeat the above process until we have seen that all paths with endpoints in $T^-$ have their other endpoint at $0$ or in $T^-$.
\end{proof}

Note that due to the structure shown in the above, it is easy to join these paths into a single path using short edges.

\begin{claim}\label{joining paths in T-}
We can use only $2$-type edges to connect the paths with endpoints in $T^-$ into a single path.
\end{claim}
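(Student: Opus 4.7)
The plan is to exploit the explicit pairing of $T^-$-endpoints established in the proof of Claim \ref{paths T- to T-}. There it is shown that for each $0 \leq k$ below the final stage, the path in $L$ at stage $k$ has its two endpoints equal to $\alpha_k := a_1 + 2k = -(x_b+1) + 2k$ and $\beta_k := a_2 - 2k = -\frac{x_b+1}{2} - 2k$, with at most one ``degenerate'' path at the final stage whose endpoints are some $\gamma \in T^-$ and the special vertex $0$. Thus the $T^-$-endpoints of paths in $L$ split into two arithmetic progressions of common difference $2$: an increasing one starting at $a_1$ and a decreasing one starting at $a_2$, each consisting of even, negative vertices inside $T^-$.

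The key observation is that consecutive elements of either progression differ by exactly $2$, so both $(\alpha_k,\alpha_{k+1})$ and $(\beta_k,\beta_{k+1})$ are $2$-type edges of $K_n$. Moreover, none of these candidate edges already lies in $L$: the only $2$-type edge in $L$ is $(-2,2) \in M_0$, whose endpoints have opposite signs, whereas all the edges we propose to add lie entirely inside $T^-$ and therefore join two negative vertices.

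Listing the paths in $L$ with endpoints in $T^-$ as $P_0, P_1, \ldots, P_{m-1}$ (indexed so that $P_k$ has endpoints $\alpha_k$ and $\beta_k$), we assemble them into a single path by adding the following zigzag of $2$-type edges:
\begin{equation*}
(\alpha_0,\alpha_1),\ (\beta_1,\beta_2),\ (\alpha_2,\alpha_3),\ (\beta_3,\beta_4),\ \ldots
\end{equation*}
alternating between the $\alpha$-side and the $\beta$-side. At each step, the newly added edge attaches the next $P_{k+1}$ to the growing path at exactly one of its free endpoints, so no cycle is ever created, and the combined structure remains a single path. The two free endpoints of the completed structure are $\beta_0 = a_2$ and (depending on parity) either $\alpha_{m-1}$ or $\beta_{m-1}$.

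If the degenerate path $P_m$ (with endpoint $0$ and $T^-$-endpoint $\gamma = \alpha_m = \beta_m$) is present, append it at the end of the zigzag via the $2$-type edge joining $\gamma$ to whichever of $\alpha_{m-1}$ or $\beta_{m-1}$ is still a free endpoint; by construction $\gamma$ differs from both of these by $2$, so the connecting edge has type $2$ as required. I do not expect any genuine obstacle here: the construction is entirely explicit, and the only verifications needed are the disjointness of the added edges from $L$ and the fact that no cycle forms, both of which follow directly from the arithmetic-progression structure. The main bookkeeping point is simply the alternation of the zigzag and the handling of the possible terminal degenerate path.
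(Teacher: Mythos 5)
Your proof is correct and is essentially the paper's own argument: you identify the same pairing of $T^-$-endpoints into two arithmetic progressions of common difference $2$ (the $\alpha_k = a_1+2k$ and $\beta_k = a_2-2k$ from Claim \ref{paths T- to T-}), and you splice the paths together with a zigzag of $2$-type edges — which is exactly what the paper's edge set $C_B$ does. The only cosmetic difference is that your zigzag begins on the $\alpha$-side (first edge $(\alpha_0,\alpha_1)$) whereas $C_B$ begins on the $\beta$-side (first edge $(a_2,a_2-2)$), which just leaves a different but equally valid free $T^-$-endpoint ($a_2$ rather than $-x_b-1$) at the end.
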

\begin{proof}
Use the connecting edges
\begin{multline*}
    C_B= \Bigg\{\left(-\frac{x_b-3}{2}-2,-\frac{x_b-3}{2}-4\right), (-x_b+1,-x_b+3),  \dotsc,\\ \left(-\frac{x_b-3}{2}-2-4k, -\frac{x_b-3}{2}-4-4k\right), (-x_b-1+2+4k, -x_b-1+4+4k),\dotsc \Bigg\}.
\end{multline*}
Note that this connected path has one end vertex at $(-x_b-1)$, which is the vertex with the smallest label in $T^-$, and the other end is either $0$ or the vertex $-\frac{3x_b+7}{4} \in T^-$.
\end{proof}

By doing this, we have replaced $\lceil\frac{x_b+5}{8}\rceil$ paths with a single path.
 
Now we consider vertices in $T^+$. Firstly, for $n\geq 44$ we have that vertices $3$ and $5$ are endpoints of the same path, this path uses the all $R_1$ edges along with the first two edges of $R_2$. When $n < 44$ there is a $B$ edge at vertex $5$ meaning it cannot be an endpoint. This leaves us to check odd vertices in $[7,\frac{x_b-1}{2}]$.

\begin{claim}\label{paths T+ to U-}
Every odd vertex in $[7,\frac{x_b-1}{2}]$ is the beginning of a path in $L$ that terminates in $U^-$.
\end{claim}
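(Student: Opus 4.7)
The plan is to follow the path component of $L$ containing $v$ and show its other endpoint lies in $U^-$. The key simplification is that the path structure reduces to an explicit one-dimensional recursion $v_{k+1}=\frac{n+2-v_k}{2}$ on the positive vertices visited, and termination follows from a short $2$-adic valuation argument applied to this recursion.

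First I would verify that $v$ is a degree-$1$ vertex of $L$. Since $v \leq \frac{x_b-1}{2} < \frac{x_b+3}{2}=\min J'$, the vertex $v$ avoids $J\cup J'$, so no $B$-edge is incident; and being in $T^+\setminus\{1\}$, no $R$-edge is incident either. Its only incident $L$-edge is therefore the $M_0$-edge $(v,-v)$.

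Next I would follow the path. The vertex $-v$ lies in $I_2$ and is odd, so its unique $R_2$-edge goes to $v_1:=\frac{n+2-v}{2}$ (obtained by writing $-v=-3-2k$ with $k=\frac{v-3}{2}$ and reading off the $R_2$-neighbour $\frac{n-1}{2}-k$). Setting $v_0=v$ and $v_{k+1}:=\frac{n+2-v_k}{2}$, the path traces
\[
v_0 \to -v_0 \to v_1 \to -v_1 \to v_2 \to -v_2 \to \cdots,
\]
alternating $M_0$- and $R_2$-edges. An easy induction, using the bound $v_0\leq\frac{x_b-1}{2}\leq\frac{n-5}{8}$ for the base case and the fact that $x\mapsto\frac{n+2-x}{2}$ maps $[\frac{n+9}{4},\frac{n-5}{2}]$ into itself for $n\geq 19$, shows $v_k\in[\frac{n+9}{4},\frac{n-5}{2}]\subseteq U^+$ for every $k\geq 1$. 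In particular each $-v_k$ stays in $I_2$ (so the $R_2$-edge exists whenever $v_k$ is odd), and the interval excludes $\frac{n-1}{2}$ and $\frac{n-3}{2}$, so the path never meets an $R_1$-edge (which would otherwise divert the trace into the $v=3$ path described just before this claim).

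The crux is to show the bouncing terminates. I would introduce $w_k:=3v_k-(n+2)$; the recursion becomes $w_{k+1}=-w_k/2$, so $w_k=(-1/2)^k w_0$. Since $v_0$ and $n$ are both odd, $w_0=3v_0-(n+2)$ is even, and $K:=\nu_2(w_0)\geq 1$. One checks that $v_k$ is odd iff $w_k$ is even, which occurs exactly for $k<K$, so $v_0,\ldots,v_{K-1}$ are odd and $v_K$ is even. At this final vertex, $-v_K\in U^-$ has no $R_2$-edge (as $v_K$ is even), no $R_1$-edge (as $v_K\notin\{\frac{n-3}{2},\frac{n-1}{2}\}$), and no $B$-edge (as $U^-\cap(J\cup J')=\emptyset$), so $-v_K$ is a degree-$1$ vertex of $L$ lying in $U^-$. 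This is the other endpoint of the path from $v$, completing the proof. The main obstacle is the $2$-adic valuation argument for termination; the remaining steps are bookkeeping about which edge families of $L$ are used at each stage.
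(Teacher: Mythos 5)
Your argument follows the same trajectory as the paper's: from $v$ via $M_0$ to $-v$, then via an $R_2$ edge to a vertex of $U^+$, and then alternating $M_0$ / $R_2$ edges between $U^-$ and $U^+$ until parity forces a stop. The paper's termination step is a soft ``the graph is finite, and the only place the process can halt is at an even vertex of $U^-$'' (valid because $L$ was already shown to be a linear forest). Your termination step is a genuine refinement: by linearising the affine recursion $v_{k+1}=\frac{n+2-v_k}{2}$ via $w_k = 3v_k-(n+2)$ (so that $w_{k+1}=-w_k/2$), you read off exactly how many bounces occur as $K=\nu_2(w_0)$, since $v_k$ is odd precisely when $2 \mid w_k$, i.e.\ for $k<K$. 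This is slicker, gives quantitative control over the path lengths that the paper does not record, and avoids appealing to the earlier acyclicity claim. The invariant-interval computation keeping $v_k$ inside $U^+$ (so $-v_k$ stays an $R_2$-endpoint and never meets $R_1$) is also correct and worth spelling out as you do, since the paper glosses over it.

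One small inaccuracy: you assert that $v$ ``avoids $J\cup J'$.'' You correctly rule out $J'$ (as $v<\frac{x_b+3}{2}$), but $v\in J=[-\frac{x_b-3}{2},\frac{x_b+1}{2}]$ for the range of $v$ under consideration. The conclusion that no $B$-edge meets $v$ still holds, but for a different reason: the $J$-side endpoints of $B$-edges are exactly the even vertices $-\frac{x_b-3}{2}+2k$, and your $v$ is odd. (This is the parity argument the paper uses.) Replace the $J$ claim with this parity observation and the proof is sound.
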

\begin{proof}
Every vertex $v \in [7,\frac{x_b-1}{2}]$ has an incident $M_0$ edge to $-v \in T^-$, since $v$ is odd $-v$ has an incident $R_2$ edge. By the definition of the $R_2$ edges and of $U^+$, this edge must be $(-v,r(-v))$ where $r(-v) \in U^+$. Again from $r(-v)$ the path must follow the $M_0$ edge to $-r(-v) \in U^-$. Now we have two cases, either $r(-v)$ is even and hence there is no $R$ edge at $-r(-v)$ (also clearly no $B$ in $U^-$), or $r(-v)$ is odd and there is another $R$ edge that leads to a vertex in $U^+$.

For the case where $r(-v)$ is even, since there are no other edges at $-r(-v)$ the path must terminate here. This follows the claim as $-r(-v) \in U^-$. So we assume $r(-v)$ is odd and that $(\ell,r(\ell))$ is the $R$ edge with $-r(-v) = \ell$. By definition of $U^+$ we have that $r(\ell) \in U^+$, so the path must continue via $M_0$ to $-r(\ell) \in U^-$. Once again, if $r(\ell)$ is even the path terminates here and proves the claim. So we must assume $r(\ell)$ is odd, in which case we follow the path to another vertex in $U^+$ via $R$ as before, returning to $U^-$ via $M_0$. Here we are faced again with termination of the path at an even vertex, or continuing to $U^+$ and subsequently back to $U^-$ at odd vertex. Since the graph is finite this process must end, clearly the only location the process finishes is at an even vertex in $U^-$.
\end{proof}

Once again we can use this structure to join together the above paths into fewer longer paths. Indeed, using the edges $(5,7), (9,11), (13,15),\dotsc$ for all odd vertices in $[7,\frac{x_b-1}{2}]$ immediately halves the number of paths. But we can do better than this.

\begin{claim}\label{joining paths T+ to U-}
We can use only $2$-type edges to connect the paths with endpoints in $T^+$ into at most $\frac{1}{2}(\lceil \log_2(\frac{x_b-11}{4}) \rceil +1)$ paths.
\end{claim}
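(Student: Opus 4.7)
The plan is to iterate a halving strategy using $2$-type connecting edges. For round~1 I would take the $2$-type edges $(5,7)$, $(9,11)$, $(13,15),\dotsc$ as suggested in the preceding paragraph, each joining two endpoints of $L$ at consecutive odd positions in $T^+$. By Claim \ref{paths T+ to U-} these odd vertices are endpoints of distinct paths of $L$ (each of degree $1$ in $L$), so every added edge merges two previously disjoint paths without producing a cycle or exceeding degree~$2$ at any vertex. This drops the path count from $N:=\tfrac{x_b-11}{4}$ to roughly $\lceil N/2\rceil$, and every merged path now has both of its endpoints in $U^-$.

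For each subsequent round, I would pair up current endpoints in $U^-$ using $2$-type edges between adjacent even vertices of $U^-$. The key fact enabling this is that the deterministic bouncing procedure in the proof of Claim \ref{paths T+ to U-} assigns each starting vertex of $T^+$ a \emph{unique} even terminus in $U^-$; consequently, after round~1 the two $U^-$ endpoints of a single merged path are not at $2$-type distance, and suitable $2$-type edges between the current endpoints can be chosen so as to halve the path count again without creating a cycle. Iterating, after $k$ rounds there are at most $\lceil N/2^k\rceil$ paths, so performing roughly $\lceil \log_2 N\rceil$ rounds leaves at most $\tfrac{1}{2}(\lceil \log_2 N\rceil + 1)$ paths, as claimed.

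The main obstacle is verifying that a valid pool of $2$-type connecting edges is actually available at each round beyond the first. Concretely, I expect the hardest step to be showing that at every stage the current $U^-$ endpoints can be paired by $2$-type edges in a way that satisfies: (a) each chosen pair belongs to two \emph{distinct} current paths (so no cycle is created); and (b) no vertex is reused (so all degrees remain $\leq 2$). Both conditions should follow from a careful inductive bookkeeping of where the termini land in $U^-$, exploiting the recursive structure of the bouncing argument in Claim \ref{paths T+ to U-}, but this case analysis is the technical heart of the claim.
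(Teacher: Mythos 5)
Your proposal correctly identifies the first move (pairing consecutive odd $T^+$ endpoints with $2$-type edges) and correctly flags the cycle-freeness of subsequent rounds as the crux, but the argument you sketch for that crux does not work, and the "repeated halving" framing does not match the structure of the problem.

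The local observation you rely on---that the two $U^-$ endpoints of any single merged path are not at $2$-type distance---is not sufficient to justify pairing adjacent $U^-$ endpoints. For instance, if after round~1 two merged paths had endpoint pairs $\{a,a+4\}$ and $\{a+2,a+6\}$ respectively, then both paths individually satisfy your condition, yet pairing $(a,a+2)$ and $(a+4,a+6)$ closes a cycle. Preventing this requires a global statement about which paths land where, not a per-path distance bound. The paper supplies exactly this missing structure: it shows the odd vertices of $T^+$ partition into $\ell \leq \lceil\log_2\frac{x_b-11}{4}\rceil+1$ classes $T_1,\dotsc,T_\ell$ according to how many bounces the path takes through $U^+ / U^-$ before terminating, with the key property that paths starting in $T_i$ land at \emph{consecutive} even vertices forming a block $U_i\subseteq U^-$. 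After adding $C_0$ on the $T^+$ side, every merged path has one endpoint in $U_1$ and the other in some $U_j$ with $j>1$; hence pairing within $U_1$ is cycle-free. The paper then carries out a delicate inductive bookkeeping (tracking which $T_i$'s the two ends of a merged path belong to) to show the same for $U_2,U_3,\dotsc$ in turn. This decomposition is the missing idea in your sketch; without it, there is no reason adjacent $U^-$ endpoints can be safely paired. Note also that the $\log$ factor in the bound is the number of levels $\ell$, not a number of halving rounds: your own arithmetic ("after $k$ rounds at most $\lceil N/2^k\rceil$ paths") would, if true, give a single path after $\lceil\log_2 N\rceil$ rounds, not $\tfrac{1}{2}(\lceil\log_2 N\rceil+1)$ paths, so the conclusion does not follow from the stated premise.
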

\begin{proof}
Recall that we are considering $n \geq 44$.

Consider the path $P_1$ from vertex $7$, and $P_2$ from vertex $9$. After three edges ($M_0$, $R$, and $M_0$), $P_1$ must go through vertex $-\frac{n-5}{2}$, and $P_2$ through $-\frac{n-7}{2}$. Clearly, one of $-\frac{n-5}{2}$ and $-\frac{n-7}{2}$ is even and therefore is the endpoint of the path. Let $v=7$ if $\frac{n-5}{2}$ is even, and $v=9$ otherwise, and let $-u$ be the other endpoint of the path at $v$. Then we must have that the path from $v+4$ terminates at $-u+2$, and in general the path at $v+4k \in [7,\frac{x_b-1}{2}]$ terminates at $-u+2k$ for $k \in \mathbb{N}$.

Similarly, consider the paths from vertices $v+2$ and $v+6$, $P_3$ and $P_4$. After $3$ edges, both paths are at an odd vertex in $U^-$, in particular, $P_3$ is on $-u+1$ and $P_4$ is on $-u+3$. This means they are on odd vertices in $U^-$ and hence the next edge in both paths is an $R$ edge to $U^+$. Let $w=r(-u+1)$, then we must have that $(-u+3,w+1)$ is the $R$ edge at $-u+3$. The paths $P_3$ and $P_4$ must then follow $M_0$ edges to $-w$ and $-w-1$ respectively. Once again, one of $-w$ and $-w-1$ is even and hence the path terminates here, the other must continue along another $R$ edge. Let $w'$ be the even vertex out of $w$ and $w+1$, and let $v'$ be the vertex in $T^+$ on the path to $w'$ (either $v+2$ or $v+6$). Then we have that the path from $v'+8$ must terminate at $-w'-2$, and similarly the path from vertex $v'+8k \in [7,\frac{x_b-1}{2}]$ terminates at vertex $-w'-2k$.

In the first stage we found that paths from every other vertex in $[7,\frac{x_b-1}{2}]$ terminate at consecutive endpoints in $U^-$. Let $T_1 \subseteq [7,\frac{x_b-1}{2}]$ be those vertices. Then we looked at $[7,\frac{x_b-1}{2}] \setminus T_1$ and found that half of these vertices lead a path that terminates after $5$ edges and have consecutive endpoints in $U^-$. Let $T_2 \subseteq [7,\frac{x_b-1}{2}] \setminus T_1$ be those vertices found to be in paths of length $5$. Then we can continue and consider odd vertices in $[7,\frac{x_b-1}{2}] \setminus (T_1 \cup T_2)$. By using the same argument as above we find again that half of these vertices are endpoints to paths that have $7$ edges and with the $U^-$ endpoints all consecutive.

By the end of this process we have at most $\lceil \log_2(\frac{x_b-11}{4}) \rceil +1$ sets $T_i$. For each set $T_i$, let $U_i$ be the set of vertices in $U^-$ which are endpoints of paths starting in $T_i$.

We are now ready to define our connecting edges. First we use the edges
\begin{equation*}
    C_0=\{(9,11), (13,15),\dotsc \}
\end{equation*}
for all odd vertices in $[9,\frac{x_b-1}{2}]$. Since every path starting in $T^+$ terminates in $U^-$ adding these edges does not create any cycles. In particular, these edges connect a vertex in $T_1$ with a vertex in $T_i$ where $i \neq 1$. This means that the new path has one endpoint in $U_1$ and the other in $U_i$.
After adding these edges, the vertex $7$ remains an endpoint in $T^+$, but the only other possible endpoint is the vertex $\frac{x_b+3}{2}-2=\frac{x_b-1}{2}$.

Let $T_i = \{ v^i_1,v^i_2,\dotsc,v^i_{\ell_i} \}$ where $v^i_j < v^i_k$ whenever $j<k$. Similarly, we let $U_i = \{ u^i_1,u^i_2,\dotsc,u^i_{\ell_i} \}$ where the path from $v^i_j$ ends at $u^i_j$. The next connecting edges we add will be
\begin{equation*}
    C_1=\{(u^1_1,u^1_2), (u^1_3,u^1_4),\dotsc\}.
\end{equation*}
Again, this cannot create a cycle since no path begins and ends in $U_1$. Observe that the edge $(u^1_{2i-1}, u^1_{2i})$ joins a path through vertex $v^1_{2i-1}$ with a path through $v^1_{2i}$. If $v^1_{2i-1}=k$ then $v^1_{2i}=k+4$, meaning that $k$ is joined to $k' \in \{k-2,k+2\}$ by an edge in $C_0$ (or $k=7$ has degree $1$) and $k+4$ is joined to $k'+4$ by a $C_0$ edge (unless $k+4=\frac{x_b-1}{2}$ has degree $1$). Since $k'$ and $k'+4$ differ by $4$ and neither are in $T_1$ we must have that either $k' \in T_2$ or $k'+4 \in T_2$ but cannot have both. This means that the new path containing the edge $(u^1_{2i-1}, u^1_{2i})$ cannot have both endpoints in $U_2$.

Again, we now do an analogous process for connectors in $T_2$. That is, we use connectors $C_2=\{(u^2_1,u^2_2), (u^2_3,u^2_4),\dotsc\}$, noting that we have not created a cycle since no path has two endpoints in $U_2$. Using an analogous argument to the above we also have that no new path has two endpoints in $U_3$. We continue this way until we have defined connector sets $C_0,C_1,\dotsc,C_{\ell}$ where $\ell \leq \lceil \log_2(\frac{x_b-11}{4}) \rceil +1$.

Finally we add the special connecting edge $(5,7)$.

It is left to count the number of paths we now have, as well as the number of paths we replaced. Recall that there were $\frac{x_b-11}{4}$ endpoints in $[7,\frac{x_b-1}{2}]$ and hence $\frac{x_b-11}{4}$ paths from $T^+$ to $U^-$, along with the path from $3$ to $5$. We count the new number of paths by again counting the number of endpoints. Note that the vertex $3$ is an endpoint and $\frac{x_b-1}{2}$ may also be an endpoint (if it is $1 \mod 4$). There are now no other endpoints in $T^+$. Since we connected vertices in $U_i$ by pairing them, there can be at most $1$ endpoint in each $U_i$, and hence at most $ \lceil \log_2(\frac{x_b-11}{4}) \rceil +1$ endpoints in $\cup_{1 \leq i \leq \ell} U_i$. Therefore, in total we have replaced $\frac{x_b-7}{4}$ paths with at most
\begin{equation*}
     \frac{ \lceil \log_2(\frac{x_b-11}{4}) \rceil +1}{2}
\end{equation*}
new paths. It is left only to note that we have only used $2$-type edges in this construction as we have connected consecutive odd vertices in $T^+$ and consecutive even vertices in each $U_i$.
\end{proof}

At this point, the total number of paths in $L \cup C_B \cup C_0 \cup \dots \cup C_{\ell} \cup \{(5,7)\}$ is given by
\begin{equation*}
    \frac{n-2x_b+(-1)^{\frac{n-1}{2}}}{4} - \left\lceil\frac{x_b+5}{8}\right\rceil +1 - \frac{x_b-7}{4} +  \frac{\left \lceil \log_2(\frac{x_b-11}{4}) \right\rceil +1}{2}.
\end{equation*}
This is at most
\begin{equation*}
    \frac{n+16\log_2n+163}{32}.
\end{equation*}

Let $L'=L \cup C_B \cup C_0 \cup \dots \cup C_{\ell} \cup \{(5,7)\}$ be the linear forest at this point. Finally we will connect $L'$ using any suitable edges, call the set of these additional edges $C_A$. Let $P$ be the path $P = L \cup C$, where $C = C_B \cup C_0 \cup \dots \cup C_{\ell} \cup \{(5,7)\} \cup C_A$. 

Note that $|C_A| \leq \frac{1}{32}(n+16\log_2n+131)$ by the number of paths remaining in $L'$. Furthermore, we are able to choose the edges of $C$ in such a way that for any $x$, if there are more than two $x$-type edges in $P$ then they are not spaced out evenly on $K_n$. This ensures that all edges of the same type are separated from each other by the rotations of $P$. All the edge types in $C_A$ will end up in $D$, so apart from ensuring the edges are not equally spaced there are no other conditions for these edges to follow.

\begin{claim}\label{no even spaced in P}
We can join the paths in the linear forest $L'$ into a single path $P$ such that there are no evenly spaced $x$-type edges in $P$.
\end{claim}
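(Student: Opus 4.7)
The plan is to build $C_A$ greedily by pairing up endpoints of the paths remaining in the linear forest $L'$. The key observation is that, since $n$ is odd, an equally spaced configuration of $m$ edges of type $x$ requires $m \mid n$ with $m \geq 3$; in particular, pairs of same-type edges are automatically safe, and only types attaining multiplicity $\geq 3$ in the final path $P$ can possibly violate the condition.

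I first classify edge types in $L' = L \cup C_B \cup C_0 \cup \dots \cup C_\ell \cup \{(5,7)\}$ by multiplicity. Every type $x \neq 2$ appears at most twice (once from $M_0$, at most once from $R \cup B$), so it poses no immediate threat. The only potentially high-multiplicity type is $x = 2$, since all of $C_B, C_0, \dots, C_\ell$ and the extra edge $(5,7)$ are $2$-type. A direct check using the explicit locations of these connectors (odd vertices in $T^+$, even vertices in $U^-$, and the specific $T^-$ positions in $C_B$) shows that their starting vertices do not form an arithmetic progression modulo $n$; hence the current type-$2$ edges are not equally spaced, and the remaining task is to ensure the new $C_A$ edges do not complete any forbidden AP.

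The edges of $C_A$ are then added one at a time, each step selecting a pair of endpoints in distinct components of the current forest. At a stage with $k \geq 2$ components there are at least $2(k-1)$ valid choices, each determining an edge of a fixed type. A choice is forbidden only if its edge completes an arithmetic progression of $m$ same-type edges with $m \mid n$, $m \geq 3$; for each such $(x,m)$ at most one vertex completes the AP, so at most one candidate is forbidden per $(x,m)$. Since almost all types still have multiplicity $\leq 2$, the number of forbidden candidates per step is $o(k)$ and a valid choice always exists. The main obstacle is the very last steps, when $k$ is a small constant: here a direct case check suffices, and if a stage becomes blocked we backtrack and swap an earlier $C_A$ edge for an alternative one achieving the same component-merging. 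The explicit structure of $L'$ together with the bound $|C_A| \leq \frac{1}{32}(n+16\log_2 n+131)$ gives ample room for such a small swap.
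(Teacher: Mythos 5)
Your proposal takes a fundamentally different route from the paper, and as written it has genuine gaps.

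The paper does not argue greedily or by counting forbidden positions. Instead, it exploits the geometric fact that after the preliminary connections (the edge $(0,3)$, the short edge from $-\frac{3x_b+7}{4}$ to $U^-$, and the $1$-type edges $E_M$ inside $M^+$), \emph{all remaining endpoints of $L'\cup C_A$ lie in $U^-\cup M^-\cup\{-x_b-1\}$}, a short arc of even vertices. Consequently every further edge in $C_A$ has even type at most $\frac{n-1}{4}$ and starting vertex in that arc. The paper then checks that for even $x\in[2,\frac{n-1}{4}]$ no $x$-type edge of $P$ has starting vertex in the long interval $[\frac{x_b+3}{2},\frac{n-1}{2}]\cup[-\frac{n-1}{2},-\frac{n-3}{2}]$, whose length exceeds $n/3$. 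A set of $m\geq 3$ equally spaced edges of type $x$ would have spacing $n/m\leq n/3$ and so could not avoid that interval, forcing $m<3$; and $m=2$ is impossible since $n$ is odd. The odd types ($1$ and $3$) are then disposed of by hand. This is a clean deterministic argument, and it is exactly the structural information your proposal does not use.

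The specific gaps in your argument are these. First, you assert without proof that the type-$2$ edges already present in $L'$ are not in arithmetic progression. This is not a quick check: $C_B,C_0,\dots,C_\ell$ and $(5,7)$ together contribute $\Theta(n)$ edges of type $2$ spread over three different regions ($T^+$, $T^-$, $U^-$), and the paper itself does not verify it directly but rather subsumes it into the gap argument above. Second, your greedy step claims ``at most one candidate is forbidden per $(x,m)$''; this is only correct when the current $m-1$ edges of type $x$ already sit on a partial progression of spacing $n/m$, and it ignores the fact that the \emph{final} multiplicity of type $x$ is not known at the time of the choice (a type with multiplicity $m$ at step $k$ may acquire several more edges in later steps, changing which configurations are dangerous). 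You also give no bound on the multiplicity of any one type in $C_A$; since $|C_A|=\Theta(n)$, a single type could in principle recur $\Theta(n)$ times, and then the ``$o(k)$ forbidden per step'' estimate is not justified. Third, the appeal to backtracking in the last few steps is a gesture, not an argument: without an invariant or a potential function there is no guarantee that swapping one earlier edge unblocks the process. None of these issues is obviously fatal, but they are exactly the places where a proof is required, and the paper avoids all of them by constraining where $C_A$ can live rather than by counting.
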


\begin{proof}
Consider the linear forest $L'$ and its endpoints (see summary in Table \ref{Table:2}).

\begin{table}[h!t]
\centering
\resizebox{\linewidth}{!}{
\begin{tabular}{c|l|l|l}
Set & Endpoints in set & Other endpoint of path & Reference \\ \hline
$\{0\}$ & all & $-x_b-1$ or $U^-$ & Claim \ref{joining paths in T-}\\
$U^+$ & none & - & See Table \ref{Table:1}\\
$M^+$ & all & $U^-$ and possibly $M^-$ & See $M^-$\\
$T^+$ & $3$ and possibly $\frac{x_b-1}{2}$ & $U^-$ or the pair $\{3,\frac{x_b-1}{2}\}$ & Claim \ref{joining paths T+ to U-}\\
$U^-$ & some evens & $U^-$, $M^+$, and possibly $T^+$ and $0$ & See $M^+, T^+, \{0\}$\\
$M^-$ & all evens & $M^+$ & Length $1$ paths\\
$T^-$ & $-x_b-1$ and possibly $-\frac{3x_b+7}{4}$ & $0$ or the pair $\{-x_b-1,-\frac{3x_b+7}{4}\}$ & Claim \ref{joining paths in T-}\\
\end{tabular}}
  \caption{Path connections between each segment of $L'$}
  \label{Table:2}
\end{table}

Starting with $C_A$ empty, we add edges as follows. First add the edge $(0,3)$, note that this is a $3$-type edge. Since $R$ and $B$ only contain edges of large type, there is only one $3$-type edge in $L'$. In particular, since $n$ is odd, $L' \cup \{(0,3)\}$ does not have evenly spaced $3$-type edges.

Next, suppose the vertex $v=-\frac{3x_b+7}{4}$ is indeed and endpoint. This means that the distance from $v$ to some endpoint in $U^-$ is at most $\frac{3n-29}{16}<\frac{n-1}{4}$. Indeed, in $L$ there was an endpoint at every even vertex of $[-\frac{n-5}{2},-\frac{n-1}{2}+r-1]$ until we added $C_0 \cup \dots \cup C_{\ell} \cup \{(5,7)\}$. This left at least $\frac{n+3}{16}$ vertices in $U^-$. Further note that any edge from $v$ to an endpoint in $U^-$ must have even edge type since both endpoints are on even vertices. Let $e_v$ be any such edge, note that $L' \cup \{(0,3), e_v\}$ is still a linear forest.

We now move on to the vertices in $M^+$. If $|M^+|=1$ leave this vertex as an endpoint. If $|M^+|=2$ then use a $1$-type edge $e_+$ to join them, noting that this creates a path with one vertex in $M^-$ and the other in $U^-$. This cannot create a cycle since we have not added any edges to vertices in $M^-$. If $|M^+|=3$, join with $e_+$ as in the previous case, and leave one as an endpoint. Finally, if $|M^+|=4$ join with two $1$-type edges $e_+$ and $e_+'$, again this does not create a cycle. Denote by $E_M$ the set of edges added within $M^+$, so $E_M$ depends on $|M^+|$ and is equal to one of $\emptyset$, $\{e_+\}$ or $\{e_+,e_+'\}$. Note that $L' \cup E_M$ does not contain evenly spaced $1$-type edges. Indeed, when $E_M = \emptyset,\{e_+\}$ then $L' \cup E_M$ contains at most $2$ type $1$ edges, and therefore they cannot be evenly spaced. When $E_M =\{e_+,e_+'\}$ there are exactly three $1$-type edges in $L' \cup E_M$ and two of them, $e_+$ and $e_+'$, have clockwise distance $2$ between them. Therefore there are no evenly spaced $1$-type edges.

At this point $C_A = \{(0,3), e_v\} \cup E_M$.

Now, $L' \cup C_A$ is a linear forest and has at most two endpoints outside of $U^- \cup M^- \cup \{-x_b-1\}$, the vertices $\frac{x_b-1}{2}$, and some vertex $u \in M^+$. Since $u$ and $\frac{x_b-1}{2}$ are not endpoints of the same path (see construction in \ref{joining paths T+ to U-}), we simply need to join all endpoints within $U^- \cup M^- \cup \{-x_b-1\}$ to create our path $P$. Note that since all endpoints in this region are even vertices, this will only require edges of even edge type. Moreover it only requires edges of type $\frac{n-1}{4}$ or shorter. Indeed, the longest possible edge in this interval is $(-x_b-1,-\frac{n-5}{2})$, which has type at most $\frac{n+1}{4}$, but since there are at least $\frac{n+3}{16}-1$ endpoints remaining in $U^-$ (as we saw earlier in this proof) we do not need to use this longest edge. All other edges in the interval have type at most $\frac{n-1}{4}$ as required.

We add as many of these short even edges to $C_A$ as required, until $L' \cup C_A$ is a path. Then we set $P=L' \cup C_A$.

It is left to check that $P$ has no evenly spaced edge types. Note that $L$ clearly does not since $L$ contains at most $2$ edges of each type. So we only need to check the edge types appearing in $C$. Note that among all the edges in $C$, the only edges that are not even are the edges in $E_M$ and the edge $(0,3)$. The remaining edges in $C$ are all even and have edge type at most $\frac{n-1}{4}$. We have seen that the $3$-type edge and the $1$-type edges do not create a problem, it is left to check the even edges of $C$.

Consider edges of even type in the linear forest $L$, with starting vertex in $[\frac{x_b+3}{2},\frac{n-1}{2}]\cup[-\frac{n-1}{2},-\frac{n-3}{2}]$. Such edges must all be from $R$. Indeed, even-type $M_0$ edges all start on a negative vertex, and in particular the $M_0$ edges at $-\frac{n-1}{2}$ and $-\frac{n-3}{2}$ both have odd type. Also all $B$ edges have their starting vertex in the interval $J$, which does not intersect $[\frac{x_b+3}{2},\frac{n-1}{2}]\cup[-\frac{n-1}{2},-\frac{n-3}{2}]$. We also know that all edges in $R$ have a type from $[\lceil \frac{n-1}{4} \rceil,\frac{n-1}{2}]$.

The only edges added to this interval to create $P$ are the edges in $E_M$. We know that $E_M$ only contains edges with odd type. In other words, in $P$ no edges of even type from $[2,\frac{n-1}{4}]$ have a starting vertex in the interval $[\frac{x_b+3}{2},\frac{n-1}{2}]\cup[-\frac{n-1}{2},-\frac{n-3}{2}]$. This means that if $e_1,\dotsc,e_m$ are all the $x$-types in $P$ with even $x\in [2,\frac{n-1}{4}]$, and $cd(e_i,e_{i+1})=\frac{n}{m}$ then $\frac{n}{m}\leq \frac{3n+17}{8}$. This forces $m < 3$, since $n$ is odd we cannot have any evenly spaced $x$-type edges when $m=2$.

Therefore there are no evenly spaced $x$-type edges in $P$.
\end{proof}

We use $P$ as the base path for Theorem \ref{rotations plus fixings} to give a separating path system for $K_n$. The size of this family is dependent on the size of $D =[\frac{n}{2}] \setminus F$.

\begin{claim}\label{size of D}
Let $P$ be the path defined in this section. Then $P$ is an $F$-separator path, where $D$ satisfies the following.
\begin{equation*}
    |D\cup\{1\}| \leq \frac{5n+16\log_2n+167}{32}
\end{equation*}
\end{claim}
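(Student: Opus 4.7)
The plan is to choose the partition $F \sqcup D$ so that the three $F$-separator conditions fall out of Claim \ref{MRB sat 13}, and then to carefully count how many edge types the connector set $C$ forces into $D$. The natural choice is
\begin{equation*}
  F = \{\, x \in F_L : \text{no edge of } C \text{ has type } x \,\}, \qquad D = \bigl[\tfrac{n-1}{2}\bigr] \setminus F,
\end{equation*}
so that an $F_L$-type is retained in $F$ precisely when the connectors do not add any unwanted extra copy of it.

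Condition (1) of Definition \ref{def: f sep} is then immediate because $M_0 \subseteq L \subseteq P$ already covers every type. For condition (2), Claim \ref{MRB sat 13} gives that each $x \in F_L$ appears in exactly two edges of $L$ (one from $M_0$, one from $R \cup B$); by the choice of $F$ no connector adds further $x$-type edges, so the count of two survives in $P$. For condition (3), the same claim showed \ref{GP:3} for the sub-forest $L_{F_L}$; since $F \subseteq F_L$, the sub-forest $P_F$ is contained in $L_{F_L}$, and restricting the collection of same-type pairs can only make the required distance inequalities easier to maintain.

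Since $1 \notin F_L$ we have $D \cup \{1\} = D$, so the bound to prove reduces to estimating
\begin{equation*}
  |D| \leq \bigl| [\tfrac{n-1}{2}] \setminus F_L \bigr| + \bigl| \{\, x \in F_L : C \text{ contains an } x\text{-edge} \,\} \bigr|.
\end{equation*}
Substituting $|F_L| = (r+2) + (x_b+1)/2$ together with the construction's lower bounds on $r$ and $x_b$ yields at most $\tfrac{n+7}{8}$ for the first term. For the second, I would observe that every edge in $C_B \cup C_0 \cup \dots \cup C_\ell \cup \{(5,7)\}$ has type $2$, which for $n \geq 44$ is strictly smaller than $\min F_L \geq (x_b+1)/2$; these edges therefore contribute nothing, and the second term is bounded by $|C_A| \leq \tfrac{n + 16 \log_2 n + 131}{32}$, using the path count for $L'$ derived just before the claim. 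Adding the two estimates,
\begin{equation*}
  |D \cup \{1\}| \;\leq\; \frac{4n+28}{32} + \frac{n + 16 \log_2 n + 131}{32} \;\leq\; \frac{5n + 16 \log_2 n + 167}{32}.
\end{equation*}

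The main obstacle is careful bookkeeping rather than any difficult new idea. One has to confirm that the generic connector type ($2$) and the handful of special edges in $C_A$ (namely $(0,3)$, $e_v$, and those in $E_M$) really are too short to land in the $F_L$ range $[(x_b+1)/2, (n-1)/2]$, so that none of them needlessly pushes a type into $D$ and spoils the constant. After that, the stated bound drops out by direct substitution of the already-established constants.
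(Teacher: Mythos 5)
Your proposal is correct and follows essentially the same route as the paper: put into $D$ the types not covered by $R\cup B$ plus the types touched by the connectors $C$, then count. The only quibble is a small arithmetic slip: you quote $|[\frac{n-1}{2}]\setminus F_L| \le \frac{n+7}{8}$, but in the case $n\equiv 3\pmod 4$ (which does occur for $n\equiv 3,5\pmod 6$) one gets $x_b=\lfloor\frac{n-1}{4}\rfloor-3=\frac{n-15}{4}$, and the count comes out to $\frac{n+9}{8}$, matching the paper's figure; this is absorbed by the slack in the constant $+167$, so your final bound still holds.
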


\begin{proof}
From \ref{MRB sat 13} we know that in $L$ all edge types in $R\cup B$ appear exactly twice, and the remainder appear exactly once. This means there are at most
\begin{equation*}
    \frac{n-1}{2}-\frac{n-9}{4}-2-\frac{x_b+1}{2} \leq \frac{n+9}{8}
\end{equation*}
edge types in $L$ that appear exactly once. We put all of these edge types into $D$. Note that, by the construction of $R$ and $B$, these are the shortest $\approx \frac{n}{8}$ edge types. In particular $1,2 \in D$.

Moreover, we know by \ref{MRB sat 13} that $L$ is an $F_L$-separator where $F_L = \{x: \text{there is an } e\in R\cup B \text{ with edge type } x\}$. To turn $L$ into $P$ we added various connecting edges $C$. The only way an edge type can be in $F_L$ but not in $F$ is if some edge in $C$ has edge type from $F_L$. Therefore, for the path $P$, the types associated with these $C$ edges must also appear in $D$, and the remaining types in $F$.

All edges in $ C_B \cup C_0 \cup \dots \cup C_{\ell} \cup \{(5,7)\}$ have type $2$, which are already accounted for. We used a final $\frac{1}{32}(n+16\log_2n+131)$ edges in $C_A$. This means $D$ contains at most
\begin{equation*}
    \frac{n+9}{8} + \frac{n+16\log_2n+131}{32}= \frac{5n+16\log_2n+167}{32}
\end{equation*}
edge types.
\end{proof}

It is natural to try adapting these methods to the strong version of the problem. We note that the way to adapt the path system given in Theorem \ref{upper bound} using methods from this paper would be to use more fixing paths (as in Lemma \ref{Q paths}). However, this gives an upper bound of approximately $2n$ which is not an improvement on the previously known result Theorem \ref{B et al upper bound}, although it is constructive. We omit the details. It is an interesting open problem to construct a strongly separating path system for $K_n$ with fewer than $2n$ paths.

\section{Acknowledgements}
The author was supported by an EPSRC doctoral studentship.

\end{document}